\title{Random field solutions to a linear SPDE driven by Lévy white noise}
\author{}
\date{}
\newcommand{\rc}{\right ]}
\newcommand{\lc}{\left [}
\newcommand{\R}{\mathbb{R}}
\newcommand{\lp}{\left(}
\newcommand{\rp}{\right)}
\newcommand{\N}{\mathbb{N}}
\newcommand{\E}{\mathbb{E}}
\newcommand{\Scd}{\mathcal S(\R^d)}
\newcommand{\I}{\leqslant}
\newcommand{\s}{\geqslant}
\newcommand{\vphi}{\varphi}
\newcommand{\PR}{\mathbb{P}}
\newcommand{\sL}{\mathcal{L}}
\newcommand{\sH}{\mathcal{H}}
\newcommand{\sO}{\mathcal{O}}
\newcommand{\sP}{\mathcal{P}}
\newcommand{\m}{\mathcal}
\newcommand{\dd}{\, \mathrm{d}}
\numberwithin{equation}{section}
\newtheorem{theo}{Theorem}[section]
\newtheorem{prop}[theo]{Proposition}
\newtheorem{rem}[theo]{Remark}
\newtheorem{df}[theo]{Definition}
\newcommand{\scal}[2]{ \langle #1 , #2  \rangle }
\newcommand{\limscd}[1]{\overset{\Scd}{\longrightarrow}}
\newcommand{\lims}[1]{\to}
\newcommand{\leb}[2]{\text{Leb}_{#1}\lp #2 \rp}
\newcommand{\nalpha}[1]{\dot #1^\alpha}
\begin{document}
\begin{center}{\bf\Large Random field solutions to linear SPDEs driven
\vskip 12pt

  by symmetric pure jump Lévy space-time white noises
}
\vskip 16pt
{\bf Robert C.~Dalang}\footnote[1]{Institut de math\'ematiques,
Ecole Polytechnique F\'ed\'erale de Lausanne, Station 8,
CH-1015 Lausanne, Switzerland.
Emails: robert.dalang@epfl.ch, thomas.humeau@epfl.ch ~~

Partly supported by the Swiss National Foundation for Scientific Research.

This paper is based on a chapter in the Ph.D.~thesis of Th.~Humeau, written under the supervision of R.C.~Dalang.

 {\em MSC 2010 Subject Classifications:} Primary 60H15; Secondary 60G60, 60G51.
\vskip 12pt

 {\em Key words and phrases.} Linear stochastic partial differential equation, Lévy white noise, generalized stochastic process, random field solution, $\alpha$-stable noise.

}
and {\bf Thomas Humeau}$^1$
\vskip 16pt
Ecole Polytechnique F\'ed\'erale de Lausanne
\vskip 16pt

\end{center}

\begin{abstract}
We study the notions of mild solution and generalized solution to a linear stochastic partial differential equation driven by a pure jump symmetric Lévy white noise. We identify conditions for existence for these two kinds of solutions, and we identify conditions under which they are essentially equivalent. We establish a necessary condition for the existence of a random field solution to a linear SPDE, and we apply this result to the linear stochastic heat, wave and Poisson equations driven by a symmetric $\alpha$-stable noise.
 \end{abstract}

\section{Introduction}
In this article, we consider a linear stochastic partial differential equation of the form
\begin{equation}\label{equation}
\sL u= \dot X\, ,
\end{equation}
where $\sL$ is a partial differential operator and $\dot X$ is a symmetric pure jump Lévy white noise. We study two different notions of solution to \eqref{equation}. On the one hand, from the random field approach to SPDEs, we have the concept of mild solution, which is a random field defined as the convolution of a Green's function of $\sL$ with the noise. The mild solution is therefore defined as a stochastic integral, and some conditions are needed for its existence. For example, in the simple case of Gaussian white noise, the Green's function must be square integrable. The literature for the existence of mild solutions to SPDEs in the Gaussian case is already quite extensive (see \cite{minicourse, davar} for introductory lectures, and see \cite{dalang99, conusphd} for more advanced presentations). The case of Lévy noise has been less studied, but the existence of mild solutions for various equations has been considered in \cite{balan, chongheavytailed}, and the approach via evolution equations is considered in \cite{PZ}.

From the general theory of (deterministic) partial differential equations, we have the notion of weak solutions, or solutions in the sense of (Schwartz) distributions. Since the terms ``weak'' and ``distribution'' are often used with another meaning, we will instead use the term ``generalized solutions,'' in the spirit of the book \cite{gelfand}.

In this article, we are interested in the link between the notions of mild solution and generalized solution to the linear stochastic partial differential equation \eqref{equation}. More precisely, the questions that we study are the following:
\begin{itemize}
 \item[(1)] When it can be defined, is a mild solution also a generalized solution?
 \item[(2)] When a generalized solution exists,  under what conditions can it be represented by a random field?
 \item[(3)] What kinds of solution exist in the case of the stochastic heat equation, the stochastic wave equation, or the stochastic Poisson equation driven by an $\alpha$-stable noise?
\end{itemize}

A question related to (2) was studied in \cite[Theorem 11]{dalang99}. More precisely, this reference gives a necessary condition on the Green's function of the differential operator for the existence of a random field representation (see Definition \ref{defrfrepresentation}) for the generalized solution to an SPDE driven by a Gaussian colored noise.

To answer the above questions, we first introduce the two different notions of solution to a linear SPDE in Section \ref{notion_solution}. Then, in Section \ref{mild_is_generalized}, we provide an answer to question (1), first in the $\alpha$-stable case in Theorem \ref{mildisweak}, and then in a more general case in Theorem \ref{mild_is_weak_gen}. To prove these results, we also establish a new stochastic Fubini's theorem in Theorem \ref{generalfubini}, that is interesting in its own right. Section \ref{necessary_condition} deals with question (2), and a necessary condition for the generalized solution to have a random field representation is given in Theorem \ref{rfieldsolution} for the $\alpha$-stable case, and in Theorem \ref{rfieldsolution2} for a more general case. Finally, Section \ref{examples} deals with question (3), where we study applications of these results to the case of the stochastic heat equation, the stochastic wave equation and the stochastic Poisson equation in various dimensions. The main results can be found in Theorems \ref{heat_result}, \ref{wave_result} and \ref{poisson_result}.

\section{Notations and main definitions}
We will denote by $\m D(\R^d)$ the space of $C^\infty$ compactly supported functions, and $\m D'(\R^d)$ its topological dual space, the space of distributions or generalized functions (we refer the reader to \cite{distributions_schwartz} for an exposition of these notions). We suppose that $\sL$ is a partial differential operator with adjoint $\sL^*$ (think of $\sL$ as the heat or wave operator typically). We consider a fundamental solution $\rho\in \m D'(\R^{d})$ of the operator $\sL$, that is a solution to
\begin{equation}\label{rd1}
   \sL\rho=\delta_0 \qquad\mbox{in } \m D'(\R^{d}).
\end{equation}
The fundamental solution is not always unique (and choosing this solution typically amounts to imposing initial and/or boundary conditions), and in the following, we fix the choice of $\rho$. We recall the definition \cite{distributions_schwartz} of the convolution between a distribution $\rho$ and a smooth function with compact support $\varphi$:
\begin{equation}\label{rd2}
   \varphi *\rho (t):= \scal{\rho}{\varphi (t-\cdot)}\, .
\end{equation}
Note that this convolution defines a $C^\infty$ function. Also, for $\vphi \in \m D(\R^{d})$, we define $\scal{\check \rho} \vphi:=\scal \rho {\check \vphi}$, where for all $t\in \R^d$, $\check\vphi (t):=\vphi(-t)$. For any real valued function $f$, we will define $f_+:=\max(f,0)$.

Let $\dot X$ be a symmetric pure jump Lévy white noise on $S$, where $S$ is a Borel measurable subset of $\R^d$, with characteristic triplet $(0,0, \nu)$. More precisely, we suppose that there is a Poisson random measure $J$ on $S\times \R$ with intensity measure $\dd s \, \nu(\! \dd z)$ such that
$$X(\! \dd s):=\int_{|z| \I 1} z \tilde J(\! \dd s, \dd z)+\int_{|z| > 1} z  J(\! \dd s, \dd z)\, ,$$
and $\nu$ is a symmetric Lévy measure. As usual, $ \tilde J(\! \dd s, \dd z):=   J(\! \dd s, \dd z)-\dd s\, \nu(\! \dd z)$ is the compensated Poisson random measure associated to $J$. This Lévy white noise is a particular example of an independently scattered random measure introduced in \cite{rosinski}. For a link with other definition of Lévy white noise, we refer the reader to \cite{fageot_humeau}. In \cite[Theorem 2.7]{rosinski}, Rajput and Rosinski identified the space $L(\dot X, S)$ of deterministic functions that can be integrated with respect to such a noise. In particular, in our framework, we get that
$$L(\dot X, S)=\left \{ f: S\to \R \ \text{measurable} \ : \int_{S\times \R} \lp \left | f(s) z \right |^2 \wedge 1 \rp \dd s\, \nu(\! \dd z) <+\infty \right \}\, .$$
For properties of this space, we refer the reader to \cite[p. 466]{rosinski}. In particular, it is a linear complete metric space for a suitable norm, and the convergence $f_n \to f$ in $L(\dot X, S)$ as $n\to +\infty$ is equivalent to
$$\int_{S\times \R} \lp \left | \lp f_ n(s)-f(s)\rp z \right |^2 \wedge 1 \rp \dd s\, \nu(\! \dd z) \to 0 \, , \qquad \text{as} \ n \to +\infty\, .$$
In a spatio-temporal framework, we refer the reader to \cite{chong_integrability} for integrability conditions for non-deterministic integrands.

A generalized stochastic process (or generalized random field) $U$ is a linear map from a space of test functions $\m D(\R^d)$ into $L^0(\Omega)$ (the space of a.s.~finite random variables with the metric of convergence in probability). If, in addition, this map is continuous, then by \cite[Corollary 4.2]{spdewalsh}, it has a version $\tilde U$ (i.e. for any $\varphi \in \m D(\R^d)$, $\scal U \varphi = \langle \tilde U , \varphi \rangle$ a.s.) such that for almost all $\omega \in \Omega$, for any sequence $\varphi_n\to \varphi$ in $\m D(\R^d)$ as $n\to +\infty$, $\langle \tilde U , \varphi_n \rangle(\omega)\to \langle \tilde U , \varphi \rangle(\omega)$. That is, $\tilde U$ defines a random element in $\m D'(\R^d)$. In this case, $\tilde U$ is called a continuous generalized stochastic process, or a random distribution.


\section{Notions of solution to a linear SPDE}\label{notion_solution}
We introduce two different notions of solutions to the linear SPDE \eqref{equation} with associated fundamental solution $\rho$. Notice that in this framework, we are only considering the case where the Green's function of the operator $\sL$ is given by a shift of a fundamental solution.


\subsection{Generalized solution}

 In the following we will need a hypothesis on the fundamental solution $\rho$ of the differential operator $\sL$:
\begin{itemize}
 \item[\hypertarget{hyp1}{\textbf{(H1)}}]  $\rho$ is such that for any $\varphi \in \m D(\R^{d})$, the convolution $\varphi * \check \rho$ belongs to $L(\dot X, S)$.
\end{itemize}

The case where the noise is a symmetric $\alpha$-stable noise for some $\alpha \in (0,2)$ is already quite rich, and provides some insights into the general theory. More precisely, suppose that $\dot W^\alpha$ is an $\alpha$-stable symmetric Lévy white noise on $S$, with characteristic triplet $(0,0, \nu_\alpha)$, where $\nu_\alpha(\! \dd x)=\frac 1 {2|x|^{\alpha+1}} \dd x$. The characteristic function of $\dot W^\alpha$ is given by
\begin{equation*}
 \E\lp e^{iu \dot X(A)} \rp= \exp\left [-\leb d A |u|^\alpha \right ]\, , \qquad u\in \R\, ,
\end{equation*}
for any measurable set $A\subset S$ with finite Lebesgue measure. This notion coincides with that of a symmetric  $\alpha$-stable random measure developed in \cite[§3.3]{taqqu}. Since the skewness parameter $\beta$ vanishes, it is well known that a function $f:\R^d\to \R$ is $\dot W^\alpha$-integrable if and only if $f\in L^\alpha(S)$ (see \cite[§3.4]{taqqu}). In this framework, \hyperlink{hyp1}{\textbf{(H1)}} becomes
\begin{itemize}
 \item[\hypertarget{hyp1prime}{\textbf{(H1')}}] $\rho$ is such that for any $\varphi \in \m D(\R^{d})$, the convolution $\varphi * \check \rho$ belongs to $L^\alpha(S)$.
\end{itemize}

 We can then define a generalized solution to \eqref{equation}.
\begin{df}\label{gensol}
 Assume  \hyperlink{hyp1}{\textbf{(H1)}}. The \emph{generalized solution} to the stochastic partial differential equation \eqref{equation} is the linear functional $u_{\text{gen}}$ on $\m D(\R^{d})$ such that for all $\varphi \in \m D(\R^{d})$,
\begin{equation}\label{def1}
 \scal {u_{\text{gen}}} \varphi :=\scal{ \dot X}{\varphi * \check \rho}\, .
\end{equation}
\end{df}

\begin{rem}
 The generalized solution is in general not a distribution, since it may not define a continuous linear functional on $\m D(\R^{d})$. We may require additional properties on $\rho$ to have this property.
\end{rem}

\begin{rem}
 The functional $u_{\text{gen}}$ is a solution to \eqref{equation} in the weak sense: indeed, for $\varphi \in \m D(\R^{d})$,
\begin{align*}
 \scal{\sL u_{\text{gen}}}{\varphi}&= \scal{u_{\text{gen}}}{\sL^*\varphi}=\scal{\dot X}{\lp \sL^*\varphi\rp * \check\rho}\, .
\end{align*}
Also, by \eqref{rd1},
\begin{align*}
 \lp \sL^*\varphi\rp * \check \rho(t)=\scal{\check  \rho}{\sL^*\varphi (t-\cdot)}=\scal{\rho}{\sL^*\varphi (t+\cdot)}
 = \scal{\sL\rho}{\varphi (t+\cdot)}
 =\scal{\delta_0}{\varphi(t+\cdot)}=\varphi(t)\, ,
\end{align*}
Therefore, for all $\varphi \in \m D(\R^{d})$,
$$ \scal{\sL u_{\text{gen}}}{\varphi}=\scal{ \dot X}{\varphi} \, .$$
\end{rem}

A generalized solution cannot in general be evaluated pointwise. However, a generalized function (i.e. a distribution in the sense of Schwartz) can sometimes be represented by a true function. This is the motivation for the following definition.

\begin{df}\label{defrfrepresentation}
 We say a generalized stochastic process $u$ has a random field representation if there exists a jointly measurable random field $(Y_t)_{t\in \R^d}$ such that $Y$ has almost surely locally integrable sample paths, and for any $\varphi \in \m D(\R^d)$,
\begin{equation}\label{rfrepresentation}
 \scal u \varphi = \int_{\R^d} Y_t \, \varphi (t) \dd t\, \qquad \text{a.s.}
\end{equation}
\end{df}
The generalized stochastic processes that have a random field representation are exactly those which can be evaluated pointwise. For example, the Dirac distribution $\delta_0$ does not have a random field representation.


\subsection{Mild solution}
Generalized solutions are a useful generalization of classical solutions to a partial differential equation. However, non-linear operations on generalized functions are in general difficult to define, and we are often interested in finding solutions that can be evaluated pointwise. One type of solution that is often used in the SPDE literature is the notion of mild solution. Essentially, this consists in making use of the fundamental solution to write the equation in an integral form.
In order to be able to define a mild solution to \eqref{equation}, we will need another hypothesis on the fundamental solution $\rho$:

\begin{itemize}
 \item[\hypertarget{hyp2}{\textbf{(H2)}}]For any $t \in \R^d$, $\rho(t-\cdot) \in L(\dot X, S)$.
\end{itemize}

Again, in the case where the noise is a symmetric $\alpha$-stable noise for some $\alpha \in (0,2)$  \hyperlink{hyp2}{\textbf{(H2)}} becomes:
\begin{itemize}
 \item[\hypertarget{hyp2prime}{\textbf{(H2')}}]For any $t \in \R^d$, $\rho(t-\cdot) \in L^\alpha(S)$.
 \end{itemize}

\begin{df}\label{milddef}
Under hypothesis  \hyperlink{hyp2}{\textbf{(H2)}}, we define the \emph{mild solution} of \eqref{equation} via the formula
\begin{equation}\label{def}
 u_{\text{mild}}(t) := \scal{ \dot X}{\rho(t-\cdot)}\, .
\end{equation}
\end{df}
\begin{rem}
 When it exists, the mild solution is always a random field, while the generalized solution is defined as a distribution. It might turn out that the generalized solution has a random field representation, and we can then wonder if this representation is the mild solution. This question is investigated in Section \ref{necessary_condition}.
\end{rem}
The random field $u_{\text{mild}}$ defined in \eqref{def} has a jointly measurable version. This is a consequence of the following proposition.

\begin{prop}\label{measurableversion}
Let $f:\R^n\times \R^d \to \R$ be a Borel measurable function such that for any $t\in \R^n$, $f(t,\cdot)\in L(\dot X,S)$. For any $t\in \R^n$, let
$$u(t)=\scal{\dot X}{f(t,\cdot)}\, .$$
Then the random field $u$ has a jointly measurable version.
\end{prop}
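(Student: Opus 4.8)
The plan is to reduce the statement to an abstract fact about measurable maps into $L^0(\Omega)$ and then to build a jointly measurable version by a countable approximation combined with a Borel--Cantelli estimate. I write $d_0(Y,Z):=\E\lc|Y-Z|\wedge 1\rc$ for the metric of convergence in probability on $L^0(\Omega)$. The central reduction is the claim that the map $T:\R^n\to L^0(\Omega)$ defined by $T(t):=u(t)$ is Borel measurable, where $L^0(\Omega)$ carries the topology of $d_0$.

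To prove this claim I would first show that $t\mapsto f(t,\cdot)$ is Borel measurable from $\R^n$ into the metric space $L(\dot X,S)$, whose topology is induced, by the discussion in Section~2, by the gauge $m(g,h):=\int_{S\times\R}\lp|(g(s)-h(s))z|^2\wedge 1\rp\dd s\,\nu(\!\dd z)$. Since $f$ is jointly Borel measurable, for each fixed $h\in L(\dot X,S)$ the map $t\mapsto m(f(t,\cdot),h)$ is Borel by Tonelli's theorem applied to the nonnegative integrand $(t,s,z)\mapsto|(f(t,s)-h(s))z|^2\wedge 1$. As $L(\dot X,S)$ is separable (finitely supported simple functions being dense), the preimages under $t\mapsto f(t,\cdot)$ of the $m$-balls centred at the points of a countable dense set generate the Borel $\sigma$-algebra and are each measurable, so $t\mapsto f(t,\cdot)$ is Borel measurable. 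The stochastic integral $I:L(\dot X,S)\to L^0(\Omega)$, $I(g):=\scal{\dot X}{g}$, is continuous, because convergence in $L(\dot X,S)$ forces convergence in probability of the integrals; hence $T(t)=I(f(t,\cdot))$ is Borel measurable, as claimed.

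Next I would realize $T$ through countably many jointly measurable random fields with a rate that is uniform in $t$. Taking $\mathcal F$ to be generated by $\dot X$ (equivalently by $J$), the underlying $\sigma$-algebra is countably generated, so $L^0(\Omega)$ is separable; fix a countable dense set $\{\xi_j\}_{j\s 1}$ together with fixed representatives $\xi_j(\omega)$. For each $k$ let $j_k(t)$ be the least index $j$ with $d_0(\xi_j,T(t))<2^{-k}$, which exists by density, and note that $\{t:j_k(t)=j\}$ is Borel because $t\mapsto d_0(\xi_j,T(t))$ is measurable. Then
\begin{equation*}
 \hat u_k(t,\omega):=\sum_{j\s 1}\xi_j(\omega)\,\mathds{1}_{\{j_k=j\}}(t)
\end{equation*}
is jointly measurable, and $d_0\lp\hat u_k(t,\cdot),u(t)\rp<2^{-k}$ for every $t\in\R^n$, the point being that this bound does not depend on $t$.

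Finally I would pass to the limit. From the uniform rate, $d_0\lp\hat u_{k+1}(t,\cdot),\hat u_k(t,\cdot)\rp<2^{-k+1}$, so Markov's inequality gives $\PR\lp|\hat u_{k+1}(t)-\hat u_k(t)|>2^{-k/2}\rp\I 2^{1-k/2}$ for every $t$; as these bounds are summable, for each fixed $t$ the Borel--Cantelli lemma shows that $(\hat u_k(t,\omega))_k$ is Cauchy for $\PR$-almost every $\omega$. Let $C$ be the jointly measurable set of pairs $(t,\omega)$ at which $\lim_k\hat u_k$ exists in $\R$, and set $\tilde u:=\mathds{1}_C\lim_k\hat u_k$, which is jointly measurable. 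For each fixed $t$ one has $\PR((t,\cdot)\in C)=1$, and on that event $\hat u_k(t,\cdot)\to\tilde u(t,\cdot)$; since also $\hat u_k(t,\cdot)\to u(t)$ in probability, the two limits coincide almost surely, so $\tilde u$ is the desired jointly measurable version. The crux of the argument is precisely this last step: per-$t$ convergence in probability does not by itself yield a single jointly measurable limit, since the almost surely convergent subsequences depend on $t$; it is the uniform-in-$t$ rate $2^{-k}$, extracted from separability, that lets Borel--Cantelli operate simultaneously for all $t$ and recover a genuine version.
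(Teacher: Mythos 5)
Your argument is correct: the paper does not prove this proposition itself but simply cites \cite[p.~926]{basse}, and what you have written is precisely the standard argument behind that citation --- Borel measurability of $t\mapsto f(t,\cdot)$ into the separable space $L(\dot X,S)$, continuity of the stochastic integral into $L^0(\Omega)$, a uniform-in-$t$ rate obtained by discretizing against a countable dense set, and a Borel--Cantelli passage to a jointly measurable limit. The only step worth tightening is the first: you test measurability against balls of the gauge $m$ rather than of the metric of $L(\dot X,S)$ itself, which is legitimate here because $\Phi_0(u)=\int_\R\lp|uz|^2\wedge 1\rp\nu(\dd z)$ satisfies $\Phi_0(2u)\le 4\Phi_0(u)$, so modular and metric convergence coincide and the $m$-balls centred at a countable dense set do generate the Borel $\sigma$-algebra; this deserves a sentence in a complete write-up.
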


\begin{proof}
See \cite[p. 926]{basse}.
\end{proof}


\section{When is a mild solution also a generalized solution?}\label{mild_is_generalized}
We point out that the generalized and mild solutions depend on the choice of the fundamental solution $\rho$. Therefore, once the choice of the fundamental solution has been made, it makes sense to study \emph{the} mild solution and \emph{the} generalized solution. For the remainder of this section, we fix the choice of a fundamental solution to the operator $\sL$.

The generalized solutions $u_{\text{gen}}$ and the mild solution $u_{\text{mild}}$ (under  \hyperlink{hyp1}{\textbf{(H1)}} and  \hyperlink{hyp2}{\textbf{(H2)}}, respectively) are defined by in \eqref{def1} and \eqref{def}.
Therefore, in general, if $u_{\text{mild}}$ has locally integrable sample paths, then for any $\varphi \in \m D(\R^d)$,
$$\scal{u_{\text{mild}}}{\varphi}:= \int_{\R^d} u_{\text{mild}}(t) \varphi(t) \dd t=\int_{\R^d} \scal{\dot X}{\rho(t-\cdot)}\varphi(t) \dd t\, .$$
We see, in particular, that if we can exchange the stochastic integral and the Lebesgue integral, then we get
$$\scal{u_{\text{mild}}}{\varphi}= \scal{\dot X}{\int_{\R^d} \rho(t-\cdot)\varphi(t) \dd t }=\scal{\dot X}{\varphi* \check \rho}=\scal{u_{\text{gen}}}{\varphi}\, .$$
Therefore, provided the exchange of the order of integration is valid, we have $u_{\text{mild}}=u_{\text{gen}}$ in the sense of generalized stochastic processes, and in order to answer the question of when the mild solution is also the generalized solution, we need a stochastic Fubini's theorem.


\subsection{The $\alpha$-stable case}
We first consider the case of an $\alpha$-stable symmetric noise, in which a very complete result can be given.

\begin{theo}\label{mildisweak}
 Assume  \hyperlink{hyp2prime}{\textbf{(H2')}}. Let $u_{\text{mild}}$ be a jointly measurable version of the mild solution to \eqref{equation} defined in \eqref{def}, where the noise is symmetric and $\alpha$-stable. For any $\varphi \in \m D(\R^d)$, let $\mu_\varphi(\! \dd t)= |\varphi (t) | \dd t$.
 \begin{itemize}
\item[(i)] If $\alpha > 1$, and for any $\varphi \in \m D(\R^d)$,
\end{itemize}
\vspace{-1em}
\begin{equation}\label{alpha0}
 \int_{\R^d}\lp \int_{S} \left | \rho(t-s) \right |^\alpha \dd s \rp^{\frac 1 \alpha}  \mu_\varphi(\! \dd t) <+\infty \, ,
\end{equation}
\begin{itemize}
\item[] then $u_{\text{mild}}=u_{\text{gen}}$ in the sense of generalized stochastic processes.
\item[(ii)] If $\alpha =1$, and $\rho$ is such that for any $\varphi \in \m D(\R^d)$,
\end{itemize}
\vspace{-1em}
\begin{equation}\label{alpha1}
 \int_{\R^d}\mu_\varphi(\! \dd t)  \int_{S} \! \dd s \, |  \rho(t-s)| \left [ 1+ \log_+ \lp \frac{| \rho(t-s)| \int_{\R^d} \mu_\varphi(\!\dd r)  \int_S \dd v | \rho(r-v)|    }{\lp\int_S | \rho(t-v)| \dd v \rp\lp \int_{\R^d} | \rho( r-s) | \mu_\varphi( \!\dd r)\rp} \rp \right ] <+\infty\, .
\end{equation}
\begin{itemize}
\item[]then $u_{\text{mild}}=u_{\text{gen}}$ in the sense of generalized stochastic processes.
\item[(iii)] If $\alpha <1$, and $\rho$ is such that for any $\varphi \in \m D(\R^d)$,
\end{itemize}
\vspace{-1em}
\begin{equation}\label{alphag1}
 \int_{S} \lp \int_{\R^d} |\rho(t-s)| \mu_\varphi (\! \dd t) \rp^\alpha \dd s <+\infty\, ,
\end{equation}
\begin{itemize}
\item[]then $u_{\text{mild}}=u_{\text{gen}}$ in the sense of generalized stochastic processes.
\end{itemize}
\end{theo}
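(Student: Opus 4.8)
The plan is to establish the stochastic Fubini exchange outlined just before the theorem, i.e.\ to show that for each fixed $\varphi\in\m D(\R^d)$ one has $\int_{\R^d}u_{\text{mild}}(t)\varphi(t)\dd t=\scal{\dot X}{\varphi*\check\rho}$ almost surely. First I would check that each of \eqref{alpha0}, \eqref{alpha1}, \eqref{alphag1} already guarantees \hyperlink{hyp1prime}{\textbf{(H1')}}, so that the right-hand side is defined and $\varphi*\check\rho\in L^\alpha(S)$. Indeed, for $\alpha>1$, Minkowski's integral inequality gives $\|\varphi*\check\rho\|_{L^\alpha(S)}\leq\int_{\R^d}(\int_S|\rho(t-s)|^\alpha\dd s)^{1/\alpha}\mu_\varphi(\!\dd t)$, which is exactly \eqref{alpha0}; for $\alpha<1$ one uses the pointwise bound $|\varphi*\check\rho(s)|\leq(|\varphi|*|\check\rho|)(s)$, and \eqref{alphag1} is precisely $\||\varphi|*|\check\rho|\|_{L^\alpha(S)}^\alpha<\infty$; for $\alpha=1$ the integrand of \eqref{alpha1} dominates $|\varphi|*|\check\rho|$ since $1+\log_+(\cdot)\geq 1$. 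The same bounds show $u_{\text{mild}}$ has a.s.\ locally integrable sample paths, so the left-hand side is meaningful.

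The identity would then be proved by approximating $\varphi*\check\rho$ in $L^\alpha(S)=L(\dot X,S)$ by elementary integrands. Partitioning $\mathrm{supp}\,\varphi$ into cells $A_k$ with tags $t_k$, set $g=\sum_k\varphi(t_k)\,\mathrm{Leb}(A_k)\,\rho(t_k-\cdot)$. For such a finite sum the exchange is automatic by linearity of $f\mapsto\scal{\dot X}{f}$, so that $\scal{\dot X}{g}=\sum_k\varphi(t_k)\,\mathrm{Leb}(A_k)\,u_{\text{mild}}(t_k)$. Refining the partition I want two matching limits: (a)~$g\to\varphi*\check\rho$ in $L^\alpha(S)$, which by the continuity of $f\mapsto\scal{\dot X}{f}$ from $L^\alpha(S)$ into $L^0(\Omega)$ (the Rajput--Rosinski framework) yields $\scal{\dot X}{g}\to\scal{u_{\text{gen}}}{\varphi}$ in probability; and (b)~the random Riemann sums $\scal{\dot X}{g}$ converge to $\scal{u_{\text{mild}}}{\varphi}$. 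Uniqueness of limits then forces $\scal{u_{\text{gen}}}{\varphi}=\scal{u_{\text{mild}}}{\varphi}$ a.s.

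The deterministic convergence~(a) is exactly what splits into the three regimes, since the tools for the mixed time--space norm depend on $\alpha$. For $\alpha>1$, $L^\alpha(S)$ is a Banach space and \eqref{alpha0} is the Bochner integrability condition for $t\mapsto\varphi(t)\rho(t-\cdot)$, so its approximants converge in $L^\alpha(S)$. For $\alpha<1$ there is no triangle inequality, so one works with the dominating function $|\varphi|*|\check\rho|$, whose $L^\alpha$-finiteness is \eqref{alphag1}, and controls the error through subadditivity of $x\mapsto x^\alpha$. The case $\alpha=1$ is the delicate boundary: a symmetric $1$-stable variable has no first moment, and the truncated first moment of $\scal{\dot X}{\rho(t-\cdot)}$ carries a logarithmic correction; the $L\log L$-type integrand in \eqref{alpha1}, with the scale $\int_S|\rho(t-v)|\dd v$ appearing in the denominator, is precisely the quantity that keeps this truncated expectation finite and forces the convergence.

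The main obstacle is making~(b), and the $\alpha=1$ estimate, rigorous. Because $u_{\text{mild}}$ is only jointly measurable (Proposition~\ref{measurableversion}) with locally integrable paths rather than continuous, convergence of the random Riemann sums to $\scal{u_{\text{mild}}}{\varphi}$ is not automatic; I expect to obtain it by reducing to indicator test functions via dominated convergence and controlling the residual exchange in $L^p(\Omega)$ for some $p<\alpha$, exploiting that symmetric $\alpha$-stable variables have finite moments of every order below $\alpha$, with the same integrability conditions bounding the error uniformly. The $\alpha=1$ bookkeeping is the true technical crux, as one must quantify the logarithmic growth of the truncated first absolute moment and show it is dominated by \eqref{alpha1}. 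All three cases can alternatively be subsumed under the general stochastic Fubini theorem (Theorem~\ref{generalfubini}), whose abstract hypotheses specialize to \eqref{alpha0}--\eqref{alphag1}.
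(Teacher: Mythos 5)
Your overall architecture (establish a stochastic Fubini exchange, with \eqref{alpha0}--\eqref{alphag1} supplying the integrability needed to justify it) matches the paper's, and your verification that each condition implies \hyperlink{hyp1prime}{\textbf{(H1')}} is exactly the remark the authors make after their proof. But your main line of argument has a genuine gap at step (b). For a jointly measurable random field whose sample paths are merely locally integrable, the tagged Riemann sums $\sum_k \varphi(t_k)\,\mathrm{Leb}(A_k)\,u_{\text{mild}}(t_k)$ need \emph{not} converge to $\int_{\R^d} u_{\text{mild}}(t)\varphi(t)\dd t$: Riemann sums of an $L^1_{\text{loc}}$ function with arbitrary tags can diverge, and here each $u_{\text{mild}}(t_k)$ is only pinned down as a stochastic integral for the specific tag chosen, so there is no continuity to exploit. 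Your proposed repair --- controlling the residual in $L^p(\Omega)$ for $p<\alpha$ --- does not obviously close this, particularly for $\alpha\leq 1$ where only quasi-norms are available; and the natural alternative of replacing $u_{\text{mild}}(t_k)$ by the cell average $\mathrm{Leb}(A_k)^{-1}\int_{A_k}u_{\text{mild}}(t)\dd t$ requires identifying that average with $\scal{\dot X}{\mathrm{Leb}(A_k)^{-1}\int_{A_k}\rho(t-\cdot)\dd t}$, which is precisely the Fubini exchange you are trying to prove. Relatedly, you assert that \eqref{alpha0}--\eqref{alphag1} give a.s.\ local integrability of $u_{\text{mild}}$ and that the $\alpha=1$ condition ``forces the convergence,'' but you neither prove nor cite this; it is the analytic heart of the matter.

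The paper sidesteps all of this by quoting two results of Samorodnitsky and Taqqu: their Theorem 11.3.2 states that \eqref{alpha0}, \eqref{alpha1}, \eqref{alphag1} are exactly the conditions (in the respective regimes $\alpha>1$, $\alpha=1$, $\alpha<1$) under which $\int_{\R^d}|u_{\text{mild}}(t)|\,\mu_\varphi(\!\dd t)<+\infty$ a.s., and their Theorem 11.4.1 is a stochastic Fubini theorem for $\alpha$-stable random measures that then yields the exchange directly. Your closing sentence --- that everything can be subsumed under Theorem \ref{generalfubini} --- is in fact the right route (its hypothesis \eqref{l1} is exactly the a.s.\ finiteness above), but to use it you would still need the content of Theorem 11.3.2 linking \eqref{alpha0}--\eqref{alphag1} to \eqref{l1}. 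That citation (or an independent proof of it, including the $L\log L$ computation at $\alpha=1$) is the missing ingredient; with it in hand, the Riemann-sum machinery becomes unnecessary.
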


\begin{rem}\label{mildisweakrem}
It is not difficult to check the following statements:
\begin{itemize}
 \item[(1)]  Condition \eqref{alpha0} is equivalent to:
 \end{itemize}
 \vspace{-1em}
\begin{equation*}
 t\mapsto \|\rho(t-\cdot)\|_{L^\alpha(S)} \in L^1_{\text{loc}}(\R^d)\, .
\end{equation*}
\begin{itemize}
\item[]
\begin{itemize}
 \item[(a)] If $S=\R_+^d$, and $\rho(t)=0$ for all $t\in \R^d\setminus \R_+^d$, then \eqref{alpha0} is equivalent to $\rho \in L^\alpha_{loc}(\R_+^d)$.
 \item[(b)] If $S=\R_+\times \R^{d-1}$, and $\rho(t,x)=0$ if $t<0$, then \eqref{alpha0} is equivalent to
 \end{itemize}
 \end{itemize}
 \vspace{-1em}
\begin{equation*}
 \forall t>0, \ \int_0^t\int_{\R^{d-1}} |\rho(s,y)|^\alpha \dd s \dd y <+\infty\, .
\end{equation*}
\begin{itemize}
\item[(2)]
Similarly, condition \eqref{alphag1} is equvivalent to:
\end{itemize}
\vspace{-1em}
\begin{equation*}
 \text{for any compact } K\subset \R^d, \int_S \lp \int_K |\rho(t-s) | \dd t \rp ^\alpha \dd s <+\infty\, .
\end{equation*}
\begin{itemize}
\item[]
\begin{itemize}
 \item[(a)] If $S=\R_+^d$, and $\rho(t)=0$ for all $t\in \R^d\setminus \R_+^d$, then \eqref{alphag1} is equivalent to $\rho \in L^1_{loc}(\R_+^d)$.
 \end{itemize}
 \end{itemize}

\end{rem}

\begin{proof}[Proof of Theorem \ref{mildisweak}.]
We begin with \textit{(i)}. As mentioned above, we need a stochastic Fubini theorem to exchange the Lebesgue integral and the stochastic integral. Since $\varphi \in \m D(\R^d)$, the measure $\mu_\varphi$ is finite. By \eqref{alpha0} and \cite[Theorem 11.3.2]{taqqu}, $\int_{\R^d} |u_{\text{mild}}(t)| \mu_{\varphi}(\!\dd t) <+\infty$ a.s (that is, the sample paths of $u_{\text{mild}}$ are almost surely locally integrable, and $u_{\text{mild}}$ defines a generalized random process). By the stochastic Fubini Theorem in \cite[Theorem 11.4.1]{taqqu},
\begin{equation*}
 \int_{\R^d} u_{\text{mild}}(t)  \varphi (t) \dd t = \int_{S} \lp \int_{\R^d} \rho(t-s) \varphi (t) \dd t \rp \dot W^\alpha (\!\dd s)=\scal{\dot W_\alpha}{\varphi* \check \rho}\, .
\end{equation*}
Therefore, for any $\varphi \in \m D(\R^d)$,
$$\scal{u_{\text{mild}}}{\varphi} = \scal{\dot W_\alpha}{\varphi* \check \rho}=: \scal{u_{\text{gen}}}{\varphi}\, ,$$
and therefore $u_{\text{mild}}=u_{\text{gen}}$ in the sense of generalized stochastic processes.

The proof of \textit{(ii)} and \textit{(iii)} follows the same steps, with the difference that the conditions \eqref{alpha1} and \eqref{alphag1} are necessary to apply \cite[Theorem 11.3.2]{taqqu} when $\alpha=1$ or $\alpha>1$.
\end{proof}

The careful reader may wonder if  \hyperlink{hyp1prime}{\textbf{(H1')}} is satisfied in these cases, since it is a necessary condition for the existence of the generalized solution. In fact, \eqref{alpha1} and \eqref{alphag1} immediately imply Hypothesis \hyperlink{hyp1prime}{\textbf{(H1')}} when $\alpha\I 1$, and by Minkowski's inequality for integrals \cite[A.1]{stein}, \eqref{alpha0} also implies  \hyperlink{hyp1prime}{\textbf{(H1')}} when $\alpha>1$.


\subsection{A Stochastic Fubini Theorem}
In this section, we suppose that the driving noise $\dot X$ is a pure jump symmetric Lévy white noise, that is, a Lévy white noise with characteristic triplet $(0,0, \nu)$, where $\nu$ is a symmetric Lévy measure. We can no longer rely on the pre-existing work on $\alpha$-stable random measures exposed in \cite{taqqu}, and we need another version of a stochastic Fubini theorem. For convenience, we provide here a Fubini's theorem for integrals with respect to this Lévy noise. Such stochastic Fubini theorems for $L^0$-valued random measures already exist in the literature. For instance, \cite[Corollary 1]{lebedev} is more general (it deals with stochastic integrands), but integration of non-deterministic processes with respect to Lévy white noises relies on a space-time framework, where the time component is critical for the definition of predictable processes.
\begin{theo}\label{generalfubini}
Let $\dot X$ be a symmetric pure jump Lévy white noise on $S\subset \R^d$, with characteristic triplet $(0, 0, \nu)$ and jump measure $J$. Let $f:S\times \R^n \mapsto \R$ be measurable and such that for any $t\in \R^n$, $f(\cdot ,t) \in L(\dot X, S)$, and let $\mu$ be a finite measure on $\R^n$. Suppose that
\begin{equation}\label{l1}
 \int_{\R^n} \left | \scal {\dot X}{f(\cdot, t)}\right | \mu(\! \dd t)<+\infty \, , \qquad \text{a.s.}
\end{equation}
Then, for almost all $s\in S$, $f(s,\cdot) \in L^1(\mu)$, and the function $\mu \circledast f: s \mapsto \int_{\R^n} f(s,t) \mu (\! \dd t)$ is in $L(\dot X, S)$, and
\begin{equation}\label{eqfubini}
 \int_{\R^n}  \scal {\dot X}{f(\cdot,t)} \mu(\! \dd t) = \scal{\dot X}{\mu \circledast f}\, \qquad \text{a.s.}
\end{equation}
\end{theo}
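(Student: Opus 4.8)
The plan is to represent both sides of \eqref{eqfubini} through the Poisson random measure $J$ and to reduce the identity to the commutation of the deterministic integral against $\mu$ with the stochastic integral against $\dot X$. Two structural facts will be used throughout. First, by the Rajput--Rosinski theory \cite{rosinski} the map $g \mapsto \scal{\dot X}{g}$ is a homeomorphism of $L(\dot X, S)$ onto a closed linear subspace of $L^0(\Omega)$; in particular $g_m \to g$ in $L(\dot X, S)$ if and only if $\scal{\dot X}{g_m} \to \scal{\dot X}{g}$ in probability, and a sequence whose stochastic integrals are Cauchy in probability has a limit that is again of the form $\scal{\dot X}{g}$ with $g \in L(\dot X, S)$. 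Second, the symmetry of $\nu$ removes all compensator/drift corrections: for any symmetric truncation of the jump sizes the compensating term integrates to zero, so one may freely pass between compensated and uncompensated integrals on symmetric sets, and symmetrization inequalities become available. I write $\Lambda(g) := \int_{S \times \R} \lp |g(s) z|^2 \wedge 1 \rp \dd s\, \nu(\!\dd z)$ for the modular controlling the metric of $L(\dot X, S)$.

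I would first establish \eqref{eqfubini} in a doubly truncated setting: $f$ a \emph{simple} integrand, $f(s,t) = \sum_{j=1}^m g_j(s) \mathbf 1_{B_j}(t)$ with the $g_j$ bounded and compactly supported and $\mu(B_j) < +\infty$, and the noise restricted to bounded jumps $|z| \le k$. In that case all moments are finite, $\mu \circledast f = \sum_j \mu(B_j) g_j \in L(\dot X, S)$, $f(s,\cdot) \in L^1(\mu)$ for every $s$, and the identity is immediate from the linearity of the integral (or from a classical $L^2$/$L^1$ stochastic Fubini theorem). The substance of the proof is then to remove both truncations. Removing the integrand truncation is governed by the homeomorphism of the first paragraph: choosing simple $f_m$ with $f_m(\cdot,t) \to f(\cdot,t)$ in $L(\dot X,S)$ for each fixed $t$, convergence of the right-hand side $\scal{\dot X}{\mu \circledast f_m}$ is equivalent to $L(\dot X,S)$-convergence of $\mu \circledast f_m$; so once the left-hand side is shown to converge in probability, matching the two limits forces $(\mu \circledast f_m)$ to be Cauchy in $L(\dot X,S)$, and closedness of the image identifies its limit as a $\mu \circledast f \in L(\dot X,S)$. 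A subsequential pointwise identification then shows this limit is $s \mapsto \int_{\R^n} f(s,t)\mu(\!\dd t)$ and that $f(s,\cdot) \in L^1(\mu)$ for almost every $s$, yielding both the membership claim and \eqref{eqfubini}.

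The crux, and the step I expect to be the main obstacle, is the convergence in probability of the left-hand side, and in particular the removal of the jump-size truncation, under the sole hypothesis \eqref{l1}. The difficulty is that \eqref{l1} is a genuinely $L^0$ (pathwise) condition, strictly weaker than any statement of the form $\int_{\R^n} \Lambda(f(\cdot,t)) \, \mu(\!\dd t) < +\infty$. Because $\Lambda$ is not homogeneous, $t \mapsto f(\cdot, t)$ need not be Bochner integrable in $L(\dot X, S)$, so one cannot pull the continuous functional $\scal{\dot X}{\cdot}$ through the $\mu$-integral by any soft argument, and the standard $L^1$- or $L^2$-based stochastic Fubini theorems do not apply: the integrands lie only in $L(\dot X, S)$, and the noise has infinite variance and, in the heavy-tailed regime, infinite mean, so neither a jump-size truncation nor an integrand truncation alone tames the relevant tails. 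The only available input is precisely that, by \eqref{l1}, the random field $t \mapsto \scal{\dot X}{f(\cdot, t)}$ is $\mu$-integrable almost surely.

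To exploit this I would run a dominated-convergence argument directly in $L^0$. Writing $\dot X = \dot X^{(k)} + R^{(k)}$ for the independent, symmetric small- and large-jump parts ($|z| \le k$ and $|z| > k$), symmetry yields a Lévy-type maximal inequality $\PR\lp \sup_k \left| \scal{\dot X^{(k)}}{f(\cdot, t)} \right| > \lambda \rp \le 2\, \PR\lp \left| \scal{\dot X}{f(\cdot, t)} \right| > \lambda \rp$, uniform in $k$. Integrating $1 \wedge (\cdot)$ against the finite measure $\mu$ and invoking \eqref{l1} provides the uniform-in-$k$ control of the family $\{\, t \mapsto \scal{\dot X^{(k)}}{f(\cdot, t)} \,\}_k$ that, together with the pointwise-in-$t$ convergence $\scal{\dot X^{(k)}}{f(\cdot, t)} \to \scal{\dot X}{f(\cdot, t)}$ in probability, is needed to pass the limit under $\int_{\R^n} \cdots \mu(\!\dd t)$. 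This is the delicate point where both the symmetry of $\nu$ and the exact strength of \eqref{l1} are essential, and where the estimates must be done in probability rather than in any $L^p$; once it is in place, the matching limits of the two sides deliver the identity, the membership of $\mu \circledast f$ in $L(\dot X, S)$, and the almost-everywhere finiteness of $f(s,\cdot)$ in $L^1(\mu)$.
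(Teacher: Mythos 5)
You correctly isolate the crux --- that \eqref{l1} is a purely pathwise $L^0$ condition and that no soft Bochner-type argument can pull $\scal{\dot X}{\cdot}$ through the $\mu$-integral --- but the mechanism you propose to overcome it does not close. The L\'evy-type maximal inequality $\PR\lp \sup_k |\scal{\dot X^{(k)}}{f(\cdot,t)}| > \lambda\rp \le 2\,\PR\lp |\scal{\dot X}{f(\cdot,t)}| > \lambda\rp$ is a distributional statement \emph{for each fixed} $t$; it does not produce a pathwise dominating function $t \mapsto M_\omega(t)$ with $\int M_\omega \dd\mu < +\infty$ almost surely, which is what a dominated-convergence passage under $\int_{\R^n}\cdots\mu(\!\dd t)$ actually requires. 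The step ``integrating $1\wedge(\cdot)$ against the finite measure $\mu$'' gives only $\int (1\wedge M_\omega)\dd\mu \le \mu(\R^n) < +\infty$, which is vacuous and does not convert the a.s.\ finiteness of $\int |\scal{\dot X}{f(\cdot,t)}|\mu(\!\dd t)$ into a.s.\ finiteness of $\int \sup_k|\scal{\dot X^{(k)}}{f(\cdot,t)}|\mu(\!\dd t)$. The same circularity afflicts your integrand-approximation step: to show $\int \scal{\dot X}{f_m(\cdot,t)}\mu(\!\dd t) \to \int \scal{\dot X}{f(\cdot,t)}\mu(\!\dd t)$ you again need to interchange a pointwise-in-$t$ limit in probability with the $\mu$-integral, which is precisely the interchange the theorem is meant to justify. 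Results of the kind you would need (that a.s.\ $\mu$-integrability of a symmetric infinitely divisible field transfers to each component of its L\'evy--It\^o decomposition and forces $\int |f(s,t)|\mu(\!\dd t)<+\infty$ for a.e.\ $s$) are the content of Rosi\'nski's path-properties theorem \cite[Theorem 6]{path}, which the paper invokes but which your sketch implicitly re-derives without proof.

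The paper's proof avoids every such interchange by a different device: it takes $\mu$ to be a probability measure and replaces $\int_{\R^n}\cdots\mu(\!\dd t)$ by the Monte Carlo averages $\frac1n\sum_{i=1}^n(\cdots)(T_i)$ over i.i.d.\ samples $T_i\sim\mu$ on an auxiliary space. Hypothesis \eqref{l1} is exactly the statement that $\E'|\scal{\dot X}{f(\cdot,T_1)}|<+\infty$ for $\PR$-a.e.\ $\omega$, so the strong law of large numbers gives convergence of the left-hand side; each finite average equals $\scal{\dot X}{\frac1n\sum_i f(\cdot,T_i)}$ by plain linearity (no limit interchange at all); the isomorphism property of $g\mapsto\scal{\dot X}{g}$ (your first structural fact, used in the same way) upgrades Cauchy-in-probability of the integrals to Cauchy-in-$L(\dot X,S)$ of the averaged integrands; and \cite[Theorem 6]{path} plus a second application of the SLLN identifies the limit as $\mu\circledast f$. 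If you want to salvage your route, you would need to first prove the transfer of \eqref{l1} to the truncated noises and to a.e.-in-$s$ integrability of $f(s,\cdot)$ --- i.e.\ essentially reprove \cite[Theorem 6]{path} --- before any dominated convergence can be run.
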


\begin{rem}
We emphasize that the $\circledast$ operation is not commutative. In particular, it involves a measure and a measurable function whose roles are not interchangeable.
\end{rem}

\begin{proof}[Proof of Theorem \ref{generalfubini}]
The main probability space is $\lp \Omega, \m F, \PR \rp$. Since $\mu$ is a finite measure, we can suppose without loss of generality that it is a probability measure on $\R^n$. Let $\lp \Omega', \mathcal F', \PR'\rp$ be a probability space, and $(T_i)_{i\s1}$ be a sequence of i.i.d. random variables on this space with law $\mu$. We write $\E'$ for the expectation with respect to the probability measure $\PR'$. In this framework, \eqref{l1} is equivalent to
\begin{equation*}
 \E'\lp \left | \scal{\dot X}{ f(\cdot , T_1)}\right | \rp <+\infty\, \qquad \PR-\text{a.s.}
\end{equation*}
(we are using the jointly measurable version of $\scal {\dot X}{f(\cdot, t)}$ provided by Proposition \ref{measurableversion}). More precisely, there is a set $\Omega_1\subset \Omega$ such that $\PR(\Omega_1)=1$, and for any $\omega \in \Omega_1$,
\begin{equation*}
 \E'\lp \left | \scal{\dot X}{ f(\cdot , T_1)}(\omega)\right | \rp <+\infty\, .
\end{equation*}
By the strong law of large numbers, for any $\omega\in \Omega_1$, there is a set $\Omega_1'(\omega)\subset \Omega'$ such that $\PR'\lp \Omega_1'(\omega)\rp=1$ and for any $\omega'\in \Omega_1'(\omega)$,
\begin{equation}
 \label{slln}
 \frac 1 n \sum_{i=1}^n  \scal{\dot X}{ f(\cdot , T_i(\omega'))}(\omega) \to  \E'\lp  \scal{\dot X}{ f(\cdot , T_1)} (\omega) \rp \qquad \text{as} \ n\to +\infty\, .
\end{equation}
We define
$$A=\left\{ (\omega, \omega')\in \Omega\times \Omega' : \eqref{slln}\ \text{occurs}\right \}\, .$$
Then $A \in \m F \times \mathcal F'$. For $\omega\in \Omega$, let
$$A_\omega=\left \{ \omega' \in \Omega' : (\omega, \omega')\in A\right \}\, .$$
Then, for any $\omega \in \Omega_1$, $\PR'\lp A_\omega\rp =1$, and we deduce that $\PR\times \PR'(A)=1$.

For any $n\in \N, s\in S$ and $\omega'\in \Omega'$, we set $f_n(s,\omega')=\frac 1 n \sum_{i=1}^n f(s, T_i(\omega'))$. Then, $f_n(\cdot, \omega') \in L(\dot X, S)$ since this is a vector space. For any $\omega'\in \Omega'$, there is a set $\Omega_n(\omega') \subset \Omega$ such that $\PR\lp \Omega_n(\omega')\rp=1$ and for any $\omega\in \Omega_n(\omega')$,
\begin{equation}\label{linearity}
  \frac 1 n \sum_{i=1}^n  \scal{\dot X}{ f(\cdot , T_i(\omega'))}(\omega)=\scal{\dot X}{f_n(\cdot, \omega')}(\omega)\, .
\end{equation}
For any $\omega'\in \Omega'$, the set $\Omega_\infty (\omega')=\bigcap_{n=1}^{+\infty}\Omega_n(\omega')$ is such that $\PR\lp \Omega_\infty(\omega')\rp=1$ and for any $\omega\in \Omega_\infty(\omega')$, \eqref{linearity} holds for all $n\in \N$. We define
$$B=\left\{ (\omega, \omega')\in \Omega\times \Omega' : \eqref{linearity}\ \text{occurs for all }n\in \N \right \}\, .$$
Then, for $\omega'\in \Omega'$, let
$$B^{\omega'}=\left \{ \omega \in \Omega : (\omega, \omega')\in B\right \}\, .$$
For any $\omega' \in \Omega'$, $\PR\lp B^{\omega'}\rp =1$, and we deduce that
$$\int_{\Omega'}\PR\lp B^{\omega'}\rp \PR'(\! \dd \omega ')  =1\, .$$
By Fubini's theorem, we deduce that
\begin{equation}\label{rd3}
   \int_{\Omega'}\lp \int_\Omega \mathds 1_{(\omega, \omega')\in A\cap B} \, \PR(\! \dd \omega) \rp \PR'(\! \dd \omega ') =\int_{\Omega}\lp \int_{\Omega'} \mathds 1_{(\omega, \omega')\in A\cap B} \,\PR'(\! \dd \omega') \rp \PR(\! \dd \omega )  =1\, .
\end{equation}
Let $\omega'\in \Omega$. We define
$$\lp A\cap B\rp^{\omega'}=\left \{ \omega \in \Omega : (\omega, \omega') \in A\cap B \right\}\, .$$
From \eqref{rd3}, for $\PR'$-almost all $\omega' \in \Omega'$, $\PR\lp \lp A\cap B\rp^{\omega'}\rp=1$. In other words, for $\PR'$-almost all $\omega'\in \Omega'$,
$$ \frac 1 n \sum_{i=1}^n  \scal{\dot X}{f(\cdot ,T_i(\omega'))}(\omega) =\scal{\dot X}{f_n(\cdot, \omega')}(\omega) \to  \E'\lp  \scal{\dot X}{f(\cdot , T_1)} (\omega) \rp \qquad \text{as} \ n\to +\infty\, ,$$
for $\PR$-almost all $\omega \in \Omega$. In particular, for $\PR'$-almost all $\omega'\in \Omega$, the sequence of random variables $( \scal{\dot X}{f_n(\cdot, \omega')} )_{n\s 1}$ on $\lp \Omega , \mathcal F , \PR\rp$ is a Cauchy sequence in probability. By $\PR$-a.s. linearity of $\dot X$ and the isomorphism property in \cite[Theorem 3.4]{rosinski}, which uses the symmetry of $\dot X$ (see \cite[Proposition 3.6]{rosinski}, we deduce that $\lp f_n(\cdot, \omega')\rp_{n\s 1}$ is a Cauchy sequence in $L(\dot X, S)$. By completeness, for $\PR'$-almost all $\omega' \in \Omega'$, there is a function $\tilde f(\cdot, \omega')\in L(\dot X, S)$ such that $f_n(\cdot, \omega') \to \tilde f(\cdot , \omega')$ as $n \to+\infty$ in $L(\dot X, S)$ (see \cite{rosinski} for the definition of that convergence, in particular, it implies the convergence in measure on compact subsets of $S$ \cite[p.466]{rosinski}). By \eqref{l1} and \cite[Theorem 6]{path} (which also uses the symmetry of $\dot X$), for almost every $s\in S$, $\int_{\R^d} | f(s, t) | \mu(\! \dd t)<+\infty$, that is $\E'\lp |  f(s, T_1) |\rp<+\infty$. By the strong law of large numbers, we deduce that for almost all $s\in S$, there is a set $\Omega'_s$ such that $\PR'(\Omega'_s)=1$ and for any $\omega'\in \Omega'_s$,
\begin{equation}
 \label{cv}
\frac 1 n \sum_{i=1}^n f(s, T_i(\omega')) = f_n(s,\omega') \to \E'\lp  f(s, T_1) \rp= \mu \circledast f(s) \qquad \text{as} \ n \to +\infty .
\end{equation}
Let $C=\left \{ (s,\omega') \in S\times \Omega' : \eqref{cv} \ \text{holds} \right \}$, for $s\in S$,  $C_s=\left \{\omega'\in \Omega' : (s, \omega') \in C\right \}$, and for $\omega'\in \Omega'$, $C^{\omega'}=\left \{s\in S : (s, \omega') \in C\right \}$ . Since for almost all $s\in S$, $\PR'\lp C_s\rp=1$, by Fubini's theorem,
we deduce that for almost all $\omega'\in \Omega'$, \eqref{cv} holds for almost every $s\in S$ (with respect to Lebesgue measure). We can then drop the dependence in $\omega'$, so that there is a sequence $(t_i)_{i\s1}$ of deterministic times (for $\PR$) in $\R^n$ such that
\begin{equation}\label{aslim}
 \frac 1 n \sum_{i=1}^n f(s, t_i) \to \mu \circledast f(s) \qquad \text{a.e. in} \ s \ \text{as} \ n \to +\infty ,
 \end{equation}
 \begin{equation}\label{truc}
  \frac 1 n \sum_{i=1}^n  \scal{\dot X}{f(\cdot , t_i)} =\scal{\dot X}{\frac 1 n \sum_{i=1}^n f(\cdot , t_i)} \to  \int_{\R^d}  \scal{\dot X}{f(\cdot , t)} \mu(\! \dd t)  \qquad \PR-\text{a.s.},
\end{equation}
as $n\to+\infty$, and
\begin{equation}\label{llim}
 \frac 1 n \sum_{i=1}^n f(\cdot, t_i) \to \tilde f(\cdot) \qquad \text{in } L(\dot X, S) \ \text{as} \ n \to +\infty .
\end{equation}
Since convergence in $L( \dot X , S)$ implies convergence almost everywhere along a subsequence (see \cite[p. 466]{rosinski}), by uniqueness of the limit we get from \eqref{aslim} and \eqref{llim} that $\mu\circledast f=\tilde f$ almost everywhere (and hence $\tilde f$ does not depend on $\omega'$), and $\frac 1 n \sum_{i=1}^n f(\cdot, t_i) \to \mu\circledast f$ in $L(\dot X, S)$. Therefore,
\begin{equation}\label{truc22}
 \scal{\dot X}{\frac 1 n \sum_{i=1}^n f(\cdot,t_i)} \to \scal{\dot X}{\mu \circledast f} \qquad \text{as} \ n\to +\infty\, ,
\end{equation}
in $\PR$-probability. By uniqueness of the limit, gathering \eqref{truc} and \eqref{truc22}, we deduce that $\PR$-almost surely, \eqref{eqfubini} holds.
\end{proof}


\subsection{The general case}

In this section, we suppose again that the driving noise $\dot X$ is a pure jump symmetric Lévy white noise, that is, a Lévy white noise with characteristic triplet $(0,0, \nu)$, where $\nu$ is a symmetric Lévy measure. We can now apply Theorem \ref{generalfubini} to our problem.

\begin{theo}\label{mild_is_weak_gen}
 Assume  \hyperlink{hyp2}{\textbf{(H2)}}. Let $u_{\text{mild}}$ be a measurable version of the mild solution to \eqref{equation} defined in \eqref{def}. Suppose that the sample paths of $u_{\text{mild}}$ are almost surely locally integrable with respect to Lebesgue measure. Then $u_{\text{mild}}=u_{\text{gen}}$ in the sense of generalized stochastic processes.
\end{theo}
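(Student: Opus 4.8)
The plan is to obtain the identity $u_{\text{mild}} = u_{\text{gen}}$ as a direct consequence of the stochastic Fubini theorem (Theorem \ref{generalfubini}), applied with an integrand and a measure tailored to the problem. Fix $\varphi \in \m D(\R^d)$ with compact support $K$. Because $u_{\text{mild}}$ has almost surely locally integrable sample paths, the functional $\scal{u_{\text{mild}}}{\varphi} := \int_{\R^d} u_{\text{mild}}(t)\varphi(t)\dd t$ is well defined a.s., so the goal reduces to showing $\scal{u_{\text{mild}}}{\varphi} = \scal{\dot X}{\varphi * \check\rho} = \scal{u_{\text{gen}}}{\varphi}$ a.s.

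The one structural point to respect is that Theorem \ref{generalfubini} takes a \emph{positive finite} measure, so the sign of $\varphi$ must be carried by the integrand rather than by the measure. Accordingly, I would take $\mu = \mu_\varphi$, where $\mu_\varphi(\! \dd t) = |\varphi(t)|\dd t$ is finite since $\varphi$ is bounded with compact support, and define $f : S \times \R^d \to \R$ by $f(s,t) = \mathrm{sgn}(\varphi(t))\,\rho(t-s)$. This $f$ is jointly measurable, and for each fixed $t$ it equals $\rho(t-\cdot)$ times a constant in $\{-1,0,1\}$, hence lies in $L(\dot X, S)$ by Hypothesis \hyperlink{hyp2}{\textbf{(H2)}} together with the vector space structure of $L(\dot X, S)$. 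Using the jointly measurable version of the stochastic integral provided by Proposition \ref{measurableversion}, one then has $\scal{\dot X}{f(\cdot, t)} = \mathrm{sgn}(\varphi(t))\,u_{\text{mild}}(t)$.

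Next I would verify the hypothesis \eqref{l1} of Theorem \ref{generalfubini}, which is exactly the step where the local integrability assumption enters: $\int_{\R^d} \lvert \scal{\dot X}{f(\cdot,t)}\rvert \,\mu_\varphi(\! \dd t) = \int_{\R^d} |u_{\text{mild}}(t)|\,|\varphi(t)|\dd t \leq \nor{\varphi}\int_K |u_{\text{mild}}(t)|\dd t < +\infty$ almost surely. Theorem \ref{generalfubini} then yields $\int_{\R^d} \scal{\dot X}{f(\cdot,t)}\,\mu_\varphi(\! \dd t) = \scal{\dot X}{\mu_\varphi \circledast f}$ a.s.

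It remains to identify the two sides. Since $\mathrm{sgn}(\varphi(t))|\varphi(t)| = \varphi(t)$, the left-hand side equals $\int_{\R^d} u_{\text{mild}}(t)\varphi(t)\dd t = \scal{u_{\text{mild}}}{\varphi}$, while $\mu_\varphi \circledast f(s) = \int_{\R^d} \rho(t-s)\varphi(t)\dd t = (\varphi * \check\rho)(s)$, so the right-hand side equals $\scal{\dot X}{\varphi * \check\rho} = \scal{u_{\text{gen}}}{\varphi}$. As $\varphi$ is arbitrary, this gives $u_{\text{mild}} = u_{\text{gen}}$ in the sense of generalized stochastic processes. All the analytic difficulty is concentrated in Theorem \ref{generalfubini}, which is already established; the only points demanding care in the present argument are routing the sign of $\varphi$ into $f$ so as to keep $\mu_\varphi$ positive, and recognizing that the local integrability hypothesis is precisely the condition \eqref{l1} requires.
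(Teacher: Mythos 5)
Your proposal is correct and takes essentially the same approach as the paper: both arguments reduce the claim to an application of the stochastic Fubini theorem (Theorem \ref{generalfubini}), with the local integrability hypothesis supplying exactly condition \eqref{l1}. The only difference is bookkeeping of the sign of $\varphi$ --- the paper splits the signed measure $\varphi(t)\dd t$ into its positive and negative parts $\mu_\varphi^{\pm}$ and applies Theorem \ref{generalfubini} twice, whereas you apply it once with the positive measure $|\varphi(t)|\dd t$ and the integrand $f(s,t)=\mathrm{sgn}(\varphi(t))\rho(t-s)$; both are equally valid.
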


\begin{proof}
Let $\varphi \in \m D(\R^d)$, and let $\mu_{\varphi}^+(\! \dd t):=\varphi_+(t) \dd t $ and $\mu_{\varphi}^-(\! \dd t):=\varphi_-(t) \dd t $, where $\varphi_+=\max(\varphi, 0)$ and $\varphi_-=max(-\varphi, 0)$ are, respectively, the positive and negative parts of $\varphi$. These two measures are finite, and are the positive and negative parts of the signed measure $\mu_\varphi(\!\dd t):= \varphi(t) \dd t$. Since $u_{\text{mild}}$ has almost surely locally integrable sample paths,
\begin{equation*}
 \int_{\R^d} \left | u_{\text{mild}}(t)\right | \mu_\varphi^\pm(\! \dd t) <+\infty\, .
\end{equation*}
Therefore, we can apply Theorem \ref{generalfubini} separately with the positive and negative part of $\mu_\varphi$, and recombining them and using \eqref{def} and \eqref{def1} yields
\begin{equation*}
 \scal{u_{\text{mild}}}{\varphi}:=\int_{\R^d} u_{\text{mild}}(t)\varphi(t) \dd t=\int_{\R^d} \scal{\dot X}{\rho(t-\cdot)} \mu_\varphi(\! \dd t)=\scal{\dot X}{\varphi* \check \rho}=\scal{u_{\text{gen}}}{\varphi}\, ,
\end{equation*}
which proves the claim.
\end{proof}

\begin{rem}
(a) In the $\alpha$-stable case, we had a necessary and sufficient condition for the sample paths of the mild solution to be locally integrable. In the general case, we do not have such precise statement, we only have the necessary condition of \cite[Theorem 6]{path}.

(b) Again, one might wonder if Hypothesis \hyperlink{hyp1}{\textbf{(H1)}} is satisfied, and it turns out that $\varphi*\check \rho=\mu_\varphi \circledast  f$, where $f(s,t):=\rho(t-s)$, and by Theorem \ref{generalfubini},  $\mu_\varphi \circledast f$ is $\dot X$-integrable, so \hyperlink{hyp1}{\textbf{(H1)}} is satisfied and the generalized solution is well defined.

(c) Having almost surely locally integrable sample paths is the minimum requirement for a stochastic process to be considered as a generalized stochastic process, since we need to be able to integrate it against any test function. Essentially, Theorem \ref{mild_is_weak_gen} states that if the mild solution can be considered as a generalized stochastic process, then it must be equal to the generalized solution.

\end{rem}



\section{Necessary condition for the existence of a random field solution}\label{necessary_condition}


In this section, we aim to answer the following question. Suppose that \hyperlink{hyp1}{\textbf{(H1)}} is satisfied. Then, the generalized solution can be defined as in Definition \ref{def1}. Suppose also that the generalized solution has a random field representation $Y$. Then, is  \hyperlink{hyp2}{\textbf{(H2)}} satisfied? And if so, is $Y$ the mild solution?

In the case of a Gaussian noise, that can be spatially correlated, this question has already been investigated under slightly different assumptions in \cite[Theorem 11]{dalang99}. Transposed to our framework, this theorem implies that in the case of an SPDE driven by Gaussian white noise (in space and time), if the generalized solution has a random field representation, then the fundamental solution of this SPDE is necessarily square integrable. Here, we extend this kind of statement to the setting of symmetric pure jump Lévy white noises.

\subsection{The $\alpha$-stable case}

Again, we first restrict to the case of a symmetric $\alpha$-stable noise, for some $\alpha \in (0,2)$, where we have a very complete result.
\begin{theo}\label{rfieldsolution}
Assume  \hyperlink{hyp1prime}{\textbf{(H1')}}. Let $u_{\text{gen}}$ be the generalized solution to \eqref{equation} defined by \eqref{def1}. Suppose that $u_{\text{gen}}$ has a random field representation $Y$ in the sense of Definition \ref{defrfrepresentation}, that is there exists a jointly measurable random field $(Y_t)_{t\in \R^d}$ such that $Y$ has almost surely locally integrable sample paths, and for any $\varphi\in \m D(\R^d)$,
\begin{equation}\label{eqdef}
 \scal{u_{\text{gen}}}{\varphi}= \int_{\R^d} Y_t \varphi(t) \dd t \qquad \text{a.s.}
\end{equation}
Then, for almost all $t\in \R^d$, $\rho(t-\cdot) \in L^\alpha(S)$ (i.e.  \hyperlink{hyp2prime}{\textbf{(H2')}} is satisfied almost everywhere), and
\begin{equation}\label{mildresult}
 Y_t =\scal{\dot W^\alpha}{\rho(t-\cdot)}=u_{\text{mild}}(t) \qquad \text{a.s.} \quad \text{a.e.}
\end{equation}
 Furthermore, for any $\psi \in \m D(\R^d)$, conditions \eqref{alpha0}--\eqref{alphag1} of Theorem \ref{mildisweak}, for $\alpha >1$, $\alpha = 1$, $\alpha <1$, respectively, are satisfied.
\end{theo}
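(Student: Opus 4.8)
The plan is to recover both the fundamental solution and the mild solution by testing the defining identity \eqref{eqdef} against a mollifying sequence, exploiting the special structure of symmetric $\alpha$-stable integrals: for such integrals, convergence in probability of the stochastic integrals is equivalent to convergence of the integrands in $L^\alpha(S)$. Concretely, fix a symmetric mollifier $\psi \in \m D(\R^d)$ with $\psi \s 0$ and $\int \psi = 1$, and set $\psi_n(\cdot) = n^d \psi(n\,\cdot)$. For a fixed $t_0 \in \R^d$, the function $\varphi = \psi_n(\cdot - t_0)$ is a legitimate test function, so \eqref{eqdef} reads
\begin{equation*}
 \int_{\R^d} Y_t\, \psi_n(t - t_0) \dd t = \scal{u_{\text{gen}}}{\psi_n(\cdot - t_0)} = \scal{\dot W^\alpha}{\psi_n(\cdot - t_0) * \check \rho} \qquad \text{a.s.},
\end{equation*}
where on the right $\psi_n(\cdot - t_0) * \check \rho \in L^\alpha(S)$ by \hyperlink{hyp1prime}{\textbf{(H1')}}.

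First I would control the left-hand side. Since $Y$ has almost surely locally integrable sample paths, the Lebesgue differentiation theorem for approximate identities gives, for $\PR$-almost every $\omega$ and Lebesgue-almost every $t_0$, that $\int_{\R^d} Y_t(\omega)\, \psi_n(t - t_0) \dd t \to Y_{t_0}(\omega)$ as $n \to +\infty$. Using the joint measurability of $Y$ and Fubini's theorem, I would swap the order of the two ``almost all'' quantifiers to obtain that for almost every fixed $t_0$ this convergence holds $\PR$-almost surely, hence in probability. To transfer this to the right-hand side, observe that the random variables $\scal{\dot W^\alpha}{\psi_n(\cdot - t_0)*\check\rho}$ are symmetric $\alpha$-stable with scale parameter $\|\psi_n(\cdot - t_0)*\check\rho\|_{L^\alpha(S)}$, and that differences $\scal{\dot W^\alpha}{g_n - g_m}$ are again symmetric $\alpha$-stable with scale $\|g_n - g_m\|_{L^\alpha(S)}$. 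Since a sequence of symmetric $\alpha$-stable variables is Cauchy in probability if and only if its scale parameters are Cauchy, the convergence in probability of the left-hand side forces $(\psi_n(\cdot - t_0)*\check\rho)_n$ to be Cauchy in $L^\alpha(S)$; by completeness it converges to some $g_{t_0} \in L^\alpha(S)$, and $\scal{\dot W^\alpha}{\psi_n(\cdot - t_0)*\check\rho} \to \scal{\dot W^\alpha}{g_{t_0}}$ in probability, so that $Y_{t_0} = \scal{\dot W^\alpha}{g_{t_0}}$ a.s. for almost every $t_0$.

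It then remains to identify $g_{t_0}$ with $\rho(t_0 - \cdot)$. Here I would use that $\psi_n(\cdot - t_0)*\check\rho$ is the shifted mollification of $\check \rho$, which converges to $\check\rho(\cdot - t_0) = \rho(t_0 - \cdot)$ in $\m D'(\R^d)$; matching this distributional limit with the $L^\alpha(S)$-limit $g_{t_0}$ shows that $\rho(t_0 - \cdot)$ is represented by the $L^\alpha(S)$ function $g_{t_0}$, i.e.\ \hyperlink{hyp2prime}{\textbf{(H2')}} holds for almost every $t_0$, and then $Y_{t_0} = \scal{\dot W^\alpha}{\rho(t_0 - \cdot)} = u_{\text{mild}}(t_0)$, which is \eqref{mildresult}. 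I expect this identification to be the main obstacle. When $\alpha \s 1$ it is immediate, since $L^\alpha(S)$-convergence on a compact set forces $L^1_{\text{loc}}$-convergence by Hölder's inequality, hence distributional convergence, so the two limits coincide. When $\alpha < 1$, however, an $L^\alpha$ limit need not be locally integrable, so one cannot pair directly against test functions; instead one must argue through convergence in measure on compacts and almost-everywhere convergence along a subsequence to reconcile the $L^\alpha$-limit with the (reflected, shifted) fundamental solution.

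Finally, the conditions \eqref{alpha0}--\eqref{alphag1} follow from the necessity half of the stable integrability criterion. Once $Y = u_{\text{mild}}$ almost everywhere and $Y$ has almost surely locally integrable sample paths, for every $\psi \in \m D(\R^d)$ one has $\int_{\R^d} |u_{\text{mild}}(t)|\, \mu_\psi(\! \dd t) < +\infty$ almost surely, the measure $\mu_\psi$ being finite. Applying the necessary direction of \cite[Theorem 11.3.2]{taqqu} to the symmetric $\alpha$-stable process $t \mapsto u_{\text{mild}}(t) = \scal{\dot W^\alpha}{\rho(t-\cdot)}$ integrated against $\mu_\psi$ then yields exactly \eqref{alpha0} when $\alpha > 1$, \eqref{alpha1} when $\alpha = 1$, and \eqref{alphag1} when $\alpha < 1$, which completes the proof.
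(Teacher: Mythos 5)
Your first half follows the paper's own route: mollify, use Lebesgue differentiation plus Fubini to get $\langle Y,\varphi_n^{t_0}\rangle\to Y_{t_0}$ a.s.\ for a.e.\ $t_0$, use the symmetric $\alpha$-stable characteristic function to convert Cauchy-in-probability of the stochastic integrals into Cauchy-in-$L^\alpha(S)$ of the integrands, obtain a limit $g_{t_0}\in L^\alpha(S)$ with $Y_{t_0}=\scal{\dot W^\alpha}{g_{t_0}}$, and identify $g_{t_0}=\rho(t_0-\cdot)$ via H\"older when $\alpha\s 1$. All of that is sound. The genuine gap is the identification step for $\alpha<1$, which you correctly flag as the main obstacle but then dispatch with ``argue through convergence in measure on compacts and almost-everywhere convergence along a subsequence.'' That does not close the argument: from $\rho_n^{t_0}\to g_{t_0}$ in $L^\alpha(S)$ you get a.e.\ convergence along a subsequence, and separately $\rho_n^{t_0}\to\delta_{t_0}*\check\rho$ in $\m D'$, but a.e.\ convergence and distributional convergence can have different limits in the absence of local uniform integrability (approximate identities are the standard counterexample), and no domination is available here since $\rho$ is a priori only a distribution. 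So the two limits cannot be matched pointwise in $t_0$, and since your derivation of \eqref{mildresult} and of \eqref{alphag1} both sit downstream of this identification, the whole $\alpha<1$ case is left open.

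The paper closes this gap by a different device, which is the real content of the proof for general $\alpha$: set $g(t,s)=\limsup_n\rho_n^t(s)$ (jointly measurable), first derive the Taqqu integrability conditions \eqref{e1}--\eqref{eq3} \emph{for $g$} from the local integrability of $Y$ via the necessity half of \cite[Theorem 11.3.2]{taqqu}, then apply the stochastic Fubini theorem \cite[Theorem 11.4.1]{taqqu} to get $\int Y_t\psi(t)\dd t=\scal{\dot W^\alpha}{\psi\circledast g}$, compare with $\scal{u_{\text{gen}}}{\psi}=\scal{\dot W^\alpha}{\psi*\check\rho}$ and use the stable isometry to conclude $\psi\circledast g=\psi*\check\rho$ a.e.\ on $S$. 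Varying $\psi$ over a countable dense subset of $\m D(\R^d)$ then identifies $g(\cdot,s)$ with $\delta_s*\rho$ as distributions \emph{in the $t$-variable} --- where local integrability of $t\mapsto g(t,s)$ is guaranteed precisely by \eqref{e1}--\eqref{eq3} --- giving $g(t,s)=\rho(t-s)$ a.e.\ and hence \textbf{(H2')}, \eqref{mildresult}, and \eqref{alpha0}--\eqref{alphag1} simultaneously. Note that this forces the opposite order from yours: the integrability conditions must be established for $g$ \emph{before} the identification, both because they feed the stochastic Fubini step and because they are what makes $g(\cdot,s)$ a distribution; deriving them afterwards from $Y=u_{\text{mild}}$, as you propose, presupposes the identification you have not yet achieved.
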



\begin{proof}[Proof of Theorem \ref{rfieldsolution}.]There exists a set $\tilde \Omega \subset \Omega$ of probability one such that for all $\omega  \in \tilde \Omega$, the function $t\mapsto Y_t(\omega)$ is locally integrable. Without loss of generality, we can suppose that $\Omega=\tilde \Omega$. Let $\varphi \in \m D(\R^d)$ be such that $\varphi \s 0$, $\text{supp}\, \varphi \subset B(0,1)$ and $\int_{\R^d} \varphi =1$. For each $t \in \R^d$ and $n\in \N$, we define $\varphi_n^t (\cdot) = n^d \varphi (n(\cdot-t))$. Let $Z^n_t(\omega):= \scal{Y(\omega)}{\varphi_n^t}$. Then
\begin{equation}\label{int_form}
 Z^n_t(\omega)=\int_{\R^d} Y_s (\omega) n^d \varphi (n(s-t)) \dd s= \int_{\R^d} Y_{r+t} (\omega) n^d \varphi (n r) \dd r \, .
\end{equation}
Define $f(t,s,\omega) := (t+s, \omega)$. The function $f$ is measurable as a map from $( \R^d\times \R^d\times \Omega , B(\R^d)\otimes B(\R^d)\otimes \m F) $ to $(\R^d\times \Omega, B(\R^d)\otimes \m F)$, and $Y_{r+t}(\omega) = Y\circ f (r,t,\omega)$. Since $Y$ is a jointly measurable process, and by Fubini's theorem, we deduce from the second equality in \eqref{int_form} that $Z^n$ is a jointly measurable process. We define the set
\begin{equation*}
 A=\left \{ (t,\omega) : \scal{Y(\omega)}{\varphi_n^t} \to Y_t(\omega) \ \text{as} \ n \to +\infty \right\}\, .
\end{equation*}
We can write
\begin{equation*}
 A=\bigcap_{k\in \N*}\bigcup_{N\in \N}\bigcap_{n\s N} \left\{ (t,\omega) : \left | Z^n_t(\omega)- Y_t(\omega) \right | \I \frac 1 k \right \}\, ,
\end{equation*}
and since $Z^n$ and $Y$ are both jointly measurable processes, $A\in \m B(\R^d)\otimes \m F$. By Lebesgue's differentiation theorem (see \cite[Chapter 7, Exercise 2]{zygmund}), for any $\omega \in \Omega$, $\int_{\R^d} \mathds 1_{(t,\omega) \in A^c} \dd t =0$.
 Then, by Fubini's theorem,
there is a non random set $\tilde A \subset \R^d$ such that $\leb d {\tilde A} =0$ and for all $t\notin \tilde A$, $\PR\left\{\omega : (t,\omega) \in A^c\right \}=0$, that is,
\begin{equation}\label{rd4}
\PR \left\{ \scal{Y}{\varphi_n^t} \to Y_t \ \text{as} \ n\to +\infty \right \}=1.
\end{equation}

By \cite[Proposition 3.4.1]{taqqu}, for any $f\in L^\alpha (S)$,
\begin{equation}\label{charfunction}
 \E\lp e^{i \scal{\dot W^\alpha}{f}} \rp =e^{ -\|f\|_{L^\alpha(S)}^\alpha }\, ,
\end{equation}
 where $\|f\|_{L^\alpha(S)}^\alpha=\int_{S} |f(x)|^\alpha \dd x$. Therefore, by \eqref{def1} and \eqref{eqdef},
\begin{equation}\label{fouriergen}
 \E\lp e^{i \scal{u_{\text{gen}}}{\varphi}} \rp =e^{-\|\varphi * \check\rho \|_{L^\alpha}^\alpha}= \E\lp \exp\lp i \int_{\R^d} Y_s \varphi (s) \dd s\rp \rp\, .
\end{equation}
Let $t_0\in \tilde A^c$. Then $\scal{Y}{\varphi_n^{t_0}} \to Y_{t_0}$ almost surely as $n\to +\infty$. We define $\rho_n^{t_0}= \varphi_n^{t_0} * \check\rho \in L^\alpha(S)$ by  \hyperlink{hyp1}{\textbf{(H1)}}. By \eqref{fouriergen}, for $n,m\in \N$,
\begin{equation}\label{chifunctioncv}
 e^{-\|\rho_n^{t_0}- \rho_m^{t_0} \|_{L^\alpha}^\alpha}= \E\lp \exp\lp i \int_{\R^d} Y_s \lp \varphi_n^{t_0}(s) -\varphi_m^{t_0}(s)\rp \dd s \rp \rp \to 1 \qquad \text{as} \ n,m \to +\infty \, .
\end{equation}
We deduce that $(\rho_n^{t_0})_{n\s 1}$ is a Cauchy sequence in $L^\alpha(S)$. By completeness of this space, there is a function $ g^{t_0} \in L^\alpha(S)$ such that
\begin{equation}\label{truc1}
 \rho_n^{t_0} \to  g^{t_0} \, , \qquad \text{in} \ L^\alpha(S) \ \text{as} \ n\to +\infty\, .
\end{equation}
Furthermore, we know from the theory of generalized functions that $\varphi_n^{t_0}\to \delta_{t_0}$ in $\m D'(\R^d)$ as $n\to +\infty$. Therefore,
\begin{equation}\label{truc2}
 \rho_n^{t_0} \to \delta_{t_0}* \check\rho \, , \qquad \text{in} \ \m D'(\R^d) \ \text{as} \ n\to +\infty\, .
\end{equation}
From \eqref{truc1} and \eqref{truc2}, we would like to deduce that $\delta_{t_0}* \check \rho=g^{t_0}$ in $\m D'(S)$. 
If this equality is true, then it means that $s\mapsto \rho(t_0-s)$ can be considered as a function in $L^\alpha(S)$. However, in order to prove this equality, it suffices to show that for any $\theta \in \m D(S)$, $\scal{\delta_{t_0}*\check\rho}{\theta}=\scal{ g^{t_0}}{\theta}$.

In the case $\alpha \s 1$, by Hölder's inequality,
$$|\scal{g^{t_0} - \rho_n^{t_0}}{\theta}|\I \int_S \left |  g^{t_0} (s)- \rho_n^{t_0}(s) \right | |\theta(s)| \dd s\I \|  g^{t_0}- \rho_n^{t_0} \|_{L^\alpha(S)} \| \theta \|_{L^{\frac{\alpha}{\alpha-1}}(S)}\, .$$
Passing to the limit as $n\to+\infty$, we get that for all $t_0\in \tilde A^c$, $\delta_{t_0}*\check\rho=  g^{t_0} \in L^\alpha(S)$ in $\m D'(S)$. Then, in the sense of distributions, $\check \rho=\delta_{-t_0}*\delta_{t_0}*\check \rho=\delta_{-t_0}*g^{t_0}$. Therefore, in the sense of distributions, $\rho$ is equal to the function $t\in\R^d \mapsto g^{t_0}(t_0-t)$, which therefore does not depend on $t_0$, and is such that for almost all $t\in \R^d, \, \delta_t*\check \rho=\rho(t-\cdot)\in L^{\alpha}(S)$. Also, for any $t\in \tilde A^c$, by \eqref{eqdef},
$$\scal{Y}{\varphi_n^{t}} =\scal{u_{\text{gen}}}{\varphi_n^{t}} =\scal{\dot W^\alpha}{\varphi_n^t*\check \rho}=\scal{\dot W^\alpha}{\rho_n^t}\, ,$$
 and $\scal{Y}{\varphi_n^{t}} \to Y_{t}$ almost surely as $n\to +\infty$, and
\begin{equation}\label{gtfunction}
  \rho_n^t\to g^t=\delta_t*\check \rho\, , \qquad \text{in} \ L^\alpha(S)\  \text{as} \ n\to +\infty\, .
\end{equation}
Therefore, $\scal{\dot W^\alpha}{\rho_n^t - \rho(t-\cdot)} \to 0$ in law, hence in probability, that is,
 $\scal{\dot W^\alpha}{\rho_n^t} \to \scal{\dot W^\alpha}{\rho(t-\cdot)}$ in probability as $n\to +\infty$. Therefore \eqref{mildresult} holds. Since we used Hölder's inequality, this method does not work in the case $\alpha <1$, and does not imply \eqref{alpha0}, \eqref{alpha1} or \eqref{alphag1}. We therefore develop a different argument that works for any $\alpha \in (0,2)$.

If $\alpha\in (0,2)$ is arbitrary, we deduce from \eqref{charfunction} and \eqref{chifunctioncv} that $\scal{\dot W^\alpha}{\rho_n^{t_0}-g^{t_0}}\to 0$ in law as $n\to +\infty$, and by \cite[Lemma 4.7]{kallenberg}, the convergence is also in probability. By almost sure linearity, we deduce that $\scal{\dot W^\alpha}{\rho_n^{t_0}}\to \scal{\dot W^\alpha}{g^{t_0}}$ in probability as $n\to +\infty$. By uniqueness of the limit, and since $\scal{\dot W^\alpha}{\rho_n^{t_0}}=\scal{u_{\text{gen}}}{\varphi_n^{t_0}}=\scal{Y}{\varphi_n^{t_0}}$, it follows that
\begin{equation}\label{rd5}
 Y_{t_0} = \scal{\dot W^\alpha}{g^{t_0}} \, , \qquad \text{a.s. for any} \ t_0 \in \tilde A^c\, .
\end{equation}
For any $(t,s) \in \R^d\times S$, let
\begin{equation}\label{gfunction}
 g(t,s)=\limsup_{n\to +\infty}\, \rho_n^t(s)\, .
\end{equation}
Then $(t,s)\mapsto g(t,s)$ is measurable, and for $t\in \tilde A^c, \ g(t,\cdot)=g^t(\cdot)$ almost everywhere. Therefore
\begin{equation*}
 Y_{t_0} = \scal{\dot W^\alpha}{g(t_0, \cdot)} \, , \qquad \text{a.s. for any} \ t_0 \in \tilde A^c\, .
\end{equation*}
Let $\psi \in \m D(\R^d)$. Then, $\mu_\psi (\!\dd t):=  \psi(t) \dd t$ is a finite signed measure, that we can decompose into positive and negative parts $\mu^+_\psi$ and $\mu_\psi^-$. Since $Y$ is almost surely locally integrable,
\begin{equation*}
 \int_{\R^d} |Y_t| \mu^+_\psi(\!\dd t)<+\infty\, , \qquad \text{and} \qquad  \int_{\R^d} |Y_t| \mu^-_\psi(\!\dd t)<+\infty \qquad \text{a.s.}
\end{equation*}
By \cite[Theorem 11.3.2]{taqqu}, if $\alpha>1$, we get
\begin{equation}\label{e1}
  \int_{\R^d}\lp \int_{S} \left | g(t,s) \right |^\alpha \dd s \rp^{\frac 1 \alpha}  |\psi(t)|\dd t <+\infty\, ,
\end{equation}
if $\alpha=1$, we get
\begin{equation}\label{e2}
  \int_{\R^d}\dd t  \int_{S} \! \dd s \, | g(t,s) \psi(t)| \left [ 1+ \log_+ \lp \frac{|g(t,s)| \int_{\R^d}\int_S |g(r,v)| \dd v |\psi(r)| \dd r   }{\lp\int_S |g(t,v)| \dd v \rp\lp \int_{\R^d} |g(r,s) \psi(r) | \dd r\rp} \rp \right ] <+\infty\, ,
\end{equation}
and if $\alpha<1$, we get
\begin{equation}\label{eq3}
 \int_{S}\lp \int_{\R^d} \left |g(t,s) \psi(t) \right | \dd t \rp^{\alpha} \dd s <+\infty \, .
\end{equation}
By the generalized Minkowsky inequality (see \cite[A.1]{stein}) and by \eqref{e1}, when $\alpha>1$,
$$\lp \int_S \left | \int_{\R^d} \left | g(t,s) \psi(t) \right | \dd t \right |^\alpha \dd s\rp^{\frac 1 \alpha} \I \int_{\R^d}\lp \int_{S} \left | g(t,s) \right |^\alpha \dd s \rp^{\frac 1 \alpha}  |\psi(t)|\dd t <+\infty\, .$$
In particular, we see that  for almost all $s\in S$, $t\mapsto g(t,s)$ is locally integrable (and therefore defines a distribution). By \eqref{rd5},
\begin{equation}\label{eq1}
 \int_{\R^d} Y_t \mu_{\psi}(\! \dd t) = \int_{\R^d} \scal{\dot W^\alpha}{g(t,\cdot)} \psi(t) \dd t
\end{equation}
By \cite[Theorem 11.4.1]{taqqu}, we can exchange the stochastic integral with the Lebesgue integral in \eqref{eqdef}, to see that this equals
\begin{equation}\label{rd6}
  \scal{\dot W^\alpha}{ \int_{\R^d} \psi(t)g(t,\cdot) \dd t} \qquad \text{a.s.}
\end{equation}

We define $\int_{\R^d} \psi(t)g(t,s) \dd t=: (\psi\circledast g) (s)$ (this operation on $\psi$ and $g$  is not commutative). From \eqref{rd6} and \eqref{eqdef}, we get
\begin{equation}\label{tructructruc}
  \scal{\dot W^\alpha}{\psi\circledast g}- \scal{\dot W^\alpha}{ \psi*\check \rho}= \scal{\dot W^\alpha}{ \psi\circledast g-\psi*\check \rho}=\scal{Y}{\psi}-\scal{u_{\text{gen}}}{\psi}=0 \ \text{a.s.},
\end{equation}
and by \eqref{charfunction}, we deduce that $\|\psi\circledast g-\psi*\check \rho\|_{L^\alpha}^\alpha=0$. Then, for any $\psi \in \m D(\R^d)$, there is a set $B_\psi$ such that $\leb d {B_\psi}=0$ and for any $s\in S\setminus B_\psi$, $(\psi\circledast g)(s)= (\psi*\check \rho)(s)$.
Since $\m D(\R^d)$ is separable, there is a countable dense subset $D\subset \m D(\R^d)$. Let
\begin{equation*}
 B=\bigcup_{\psi \in D}B_{\psi} \, , \qquad \leb d B =0\, .
\end{equation*}
Then, for all $s\in S\setminus B$, for all $\psi \in D$,
$$\scal{g(\cdot, s)}{\psi}=\psi \circledast g(s)= \psi *\check \rho(s)=\scal{ \rho}{\psi(s+\cdot)}=\scal{\delta_s*\rho}{\psi}\, ,$$
where we have used \eqref{rd2}. Since two distributions equal on a dense set are equal everywhere by continuity, we get that for all $s\in S\setminus B$, $g(\cdot, s)=\delta_s*\rho$ in $\m D'(\R^d)$. Then, $\rho=\delta_{-s}*g(\cdot, s)$ in $\m D'(S)$, and $\rho$ is a function depending only on the $t\in \R^d$ variable, more precisely for almost all $t\in \R^d$, $\rho(t)=g(t+s,s)$ which does not depend on $s$. Then, for almost all $(t,s)\in \R^d\times S$, $g(t,s)=\rho(t-s)$. By definition of $g$ in \eqref{gfunction} and by \eqref{gtfunction}, we deduce that $\rho$ is a function such that for almost all $t\in \R^d$, $\rho(t-\cdot) \in L^\alpha(S)$. Also, from \eqref{e1}, \eqref{e2} and \eqref{eq3}, we get 
that \eqref{alpha0}--\eqref{alphag1} hold.
\end{proof}

\begin{rem}
 The proof of Theorem \ref{rfieldsolution} in the case $\alpha \s 1$ shows that the result is still valid in the case of Gaussian white noise: it suffices to set $\alpha=2$ in the proof.
\end{rem}


\subsection{The case of symmetric pure jump Lévy noise.}

We now consider the more general case of a symmetric pure jump Lévy noise $\dot X$. Similarly to the $\alpha$-stable case, we can obtain a necessary condition for the existence of a random field solution.

\begin{theo}\label{rfieldsolution2}
Assume  \hyperlink{hyp1}{\textbf{(H1)}}. Let $u_{\text{gen}}$ be the generalized solution to \eqref{equation} defined by \eqref{def1}. Suppose that $u_{\text{gen}}$ has a random field representation $Y$ in the sense of Definition \ref{defrfrepresentation}, that is, there exists a jointly measurable random field $(Y_t)_{t\in \R^d}$ such that $Y$ has almost surely locally integrable sample paths, and for any $\varphi\in \m D(\R^d)$,
\begin{equation*}
 \scal{u_{\text{gen}}}{\varphi}= \int_{\R^d} Y_t\, \varphi(t) \dd t \qquad \text{a.s.}
\end{equation*}
Then, for almost all $t\in \R^d$, $\rho(t-\cdot) \in L( \dot X,S )$ (i.e.  \hyperlink{hyp2}{\textbf{(H2)}} is satisfied almost everywhere), and
\begin{equation*}
 Y_t =\scal{\dot X}{\rho(t-\cdot)}=u_{\text{mild}}(t) \qquad \text{a.s.} \quad \text{a.e.}
\end{equation*}
\end{theo}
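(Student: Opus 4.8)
The plan is to run the proof of Theorem~\ref{rfieldsolution} almost verbatim, replacing the two ingredients special to the $\alpha$-stable setting by their counterparts for a general symmetric pure jump Lévy noise. Concretely, the characteristic function identity~\eqref{charfunction} is replaced by the isometry (isomorphism) property of \cite[Theorem 3.4]{rosinski}, which is valid because $\dot X$ is symmetric (see \cite[Proposition 3.6]{rosinski}), and the stochastic Fubini theorem \cite[Theorem 11.4.1]{taqqu} is replaced by Theorem~\ref{generalfubini}.

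First I would reproduce the mollification setup and the Lebesgue differentiation step without change, since neither depends on the law of $\dot X$. That is, fix $\varphi\in\m D(\R^d)$ with $\varphi\s 0$, $\text{supp}\,\varphi\subset B(0,1)$ and $\int_{\R^d}\varphi=1$, set $\varphi_n^t=n^d\varphi(n(\cdot-t))$, and let $Z_t^n=\scal{Y}{\varphi_n^t}$. As in~\eqref{int_form}--\eqref{rd4}, the process $Z^n$ is jointly measurable and there is a non-random Lebesgue-null set $\tilde A$ such that $\scal{Y}{\varphi_n^{t_0}}\to Y_{t_0}$ a.s.\ for every $t_0\in\tilde A^c$. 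Writing $\rho_n^t=\varphi_n^t*\check\rho$, which lies in $L(\dot X,S)$ by \hyperlink{hyp1}{\textbf{(H1)}}, the definition~\eqref{def1} together with the representation of $u_{\text{gen}}$ by $Y$ gives $\scal{Y}{\varphi_n^t}=\scal{\dot X}{\rho_n^t}$.

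The key step is the Cauchy argument. For $t_0\in\tilde A^c$ the sequence $\scal{\dot X}{\rho_n^{t_0}}$ converges a.s., hence is Cauchy in probability; by the isometry property of \cite[Theorem 3.4]{rosinski}, convergence in probability of the stochastic integrals is equivalent to convergence of the integrands in $L(\dot X,S)$, so $(\rho_n^{t_0})_{n\s 1}$ is Cauchy in $L(\dot X,S)$ and, by completeness, converges to some $g^{t_0}\in L(\dot X,S)$ with $Y_{t_0}=\scal{\dot X}{g^{t_0}}$ a.s. I would then define the jointly measurable function $g(t,s)=\limsup_{n\to+\infty}\rho_n^t(s)$ as in~\eqref{gfunction} (setting $g(t,\cdot)=0$ on the null set $\tilde A$ so that $g(\cdot,t)\in L(\dot X,S)$ for every $t$), so that $g(t_0,\cdot)=g^{t_0}$ a.e.\ and $Y_{t_0}=\scal{\dot X}{g(t_0,\cdot)}$ a.s.\ for every $t_0\in\tilde A^c$.

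Next, for $\psi\in\m D(\R^d)$ I would split $\mu_\psi=\mu_\psi^+-\mu_\psi^-$ and apply Theorem~\ref{generalfubini} to $f(s,t)=g(t,s)$ with the finite measures $\mu_\psi^\pm$; condition~\eqref{l1} holds because $\int_{\R^d}|Y_t|\,\mu_\psi^\pm(\!\dd t)<+\infty$ a.s.\ and $\scal{\dot X}{g(t,\cdot)}=Y_t$ for a.e.\ $t$. Recombining the two parts yields $\int_{\R^d}Y_t\psi(t)\dd t=\scal{\dot X}{\psi\circledast g}$ a.s., where $(\psi\circledast g)(s)=\int_{\R^d}\psi(t)g(t,s)\dd t$; comparison with $\scal{u_{\text{gen}}}{\psi}=\scal{\dot X}{\psi*\check\rho}$ gives $\scal{\dot X}{\psi\circledast g-\psi*\check\rho}=0$ a.s. At this point, in place of~\eqref{charfunction}, I would use the injectivity of $h\mapsto\scal{\dot X}{h}$ on $L(\dot X,S)$ (again \cite[Theorem 3.4]{rosinski}, using that $\nu$ is a non-zero Lévy measure) to conclude $\psi\circledast g=\psi*\check\rho$ a.e.\ in $s$. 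The remainder is then identical to the end of the proof of Theorem~\ref{rfieldsolution}: separability of $\m D(\R^d)$ furnishes a single null set $B\subset S$ with $g(\cdot,s)=\delta_s*\rho$ in $\m D'(\R^d)$ for $s\notin B$, forcing $\rho$ to be a genuine function with $g(t,s)=\rho(t-s)$ for a.e.\ $(t,s)$, whence $\rho(t-\cdot)\in L(\dot X,S)$ for a.e.\ $t$ and $Y_t=\scal{\dot X}{\rho(t-\cdot)}=u_{\text{mild}}(t)$ a.s.\ a.e. I expect the main obstacle to be precisely the Cauchy/completeness step and the verification of the hypotheses of Theorem~\ref{generalfubini}: one must check that the equivalence between convergence in probability of $\scal{\dot X}{\rho_n^{t_0}}$ and convergence of $\rho_n^{t_0}$ in $L(\dot X,S)$ genuinely rests on the symmetry of the noise, and that the integrand $g(\cdot,t)$ produced by the limsup lies in $L(\dot X,S)$ and satisfies~\eqref{l1}.
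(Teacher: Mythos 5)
Your proposal is correct and follows essentially the same route as the paper: mollification plus Lebesgue differentiation, the Cauchy/completeness argument in $L(\dot X,S)$ via the isomorphism property of \cite[Theorem 3.4 and Proposition 3.6]{rosinski}, Theorem \ref{generalfubini} in place of \cite[Theorem 11.4.1]{taqqu}, and the same endgame identifying $g(t,s)$ with $\rho(t-s)$. The only cosmetic difference is that the paper establishes the Cauchy-in-probability property of $\scal{\dot X}{\rho_n^{t_0}-\rho_m^{t_0}}$ through convergence of characteristic functions to $1$, whereas you deduce it directly from the a.s.\ convergence of $\scal{Y}{\varphi_n^{t_0}}$; both rest on the same identification and the same isomorphism.
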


\begin{proof}
We use the same notations as in the proof of Theorem \ref{rfieldsolution}. By the same reasoning as in the proof of Theorem \ref{rfieldsolution}, there is a non random set $\tilde A \subset \R^d$ such that $\leb d {\tilde A} =0$ and for all $t\notin \tilde A$, $\PR \left\{ \scal{Y}{\varphi_n^t} \to Y_t \ \text{as} \ n\to +\infty \right \}=1$. Then, as before we define $\rho_n^{t_0}= \varphi_n^{t_0} * \check\rho \in L(\dot X,S)$.
For $n,m\in \N$ and $t_0 \notin \tilde A$,
\begin{align*}
\E \lp e^{i\scal {\dot X} {\rho_n^{t_0}-\rho_m^{t_0}}} \rp= \E\lp e^{i \int_{\R^d} Y_s \lp \varphi_n^{t_0}(s) -\varphi_m^{t_0}(s)\rp \dd s} \rp \to 1 \qquad \text{as} \ n,m \to +\infty \, .
\end{align*}
We deduce that $\scal {\dot X} {\rho_n^{t_0}-\rho_m^{t_0}}$ converges to zero in law, hence in probability. Since $\dot X$ is symmetric, the linear mapping $f\in L(\dot X, S) \mapsto \scal {\dot X} f \in L^0(\Omega)$ is an isomorphism (see \cite[Theorem 3.4 and Proposition 3.6]{rosinski}). In particular the inverse map is continuous, therefore the sequence $\lp \rho_n^{t_0}\rp_{n\in \N}$ is Cauchy in $L(\dot X, S)$. This space is complete, therefore there is a function $g^{t_0}$ such that $\rho_n^{t_0}\to g^{t_0}$ in $L(\dot X, S)$. For any $(t,s) \in \R^d\times S$, let
\begin{equation*}
 g(t,s)=\limsup_{n\to +\infty}\, \rho_n^t(s)\, .
\end{equation*}
Then $(t,s)\mapsto g(t,s)$ is measurable, and for $t\in \tilde A^c, \ g(t,\cdot)=g^t(\cdot)$ almost everywhere. As in \eqref{rd5}, we get that for almost all $t_0 \in \R^d$, $Y_{t_0}=\scal{\dot X}{g^{t_0}}$ almost surely. Also, since $Y$ has almost surely locally integrable sample paths, for any $\psi\in \m D(\R^d)$,
\begin{equation*}
 \int_{\R^d} \left | Y_t \right | \mu_\psi(\! \dd t)  =    \int_{\R^d} \left | \scal{\dot X}{g^{t}} \right | \mu_\psi(\! \dd t)<+\infty \qquad \text{a.s.}\, ,
\end{equation*}
where $\mu_\psi(\! \dd t)=|\psi(t) | \dd t$. By Theorem \ref{generalfubini}, as in \eqref{rd6},
\begin{equation*}
  \int_{\R^d}  \scal{\dot X}{g^{t}} \psi(t)  \dd t= \scal{\dot X}{\psi \circledast g} \qquad \text{a.s.}
\end{equation*}
Therefore, for any $\psi\in \m D(\R^d)$,
\begin{equation*}
 \scal{\dot X}{\psi \circledast g}= \int_{\R^d}  \scal{\dot X}{g^{t}} \psi(t)  \dd t= \int_{\R^d}  Y_t \psi(t)  \dd t=  \scal{\dot X}{\check \rho*\psi} \qquad \text{a.s.,}
\end{equation*}
where the last equality is by Definitions \ref{defrfrepresentation} and \ref{gensol}. Therefore, for almost every $s\in S$, $\psi \circledast g(s)=\psi*\check \rho(s)$. We can then conclude as in the proof of Theorem \ref{rfieldsolution} after \eqref{tructructruc}.
\end{proof}


\section{Examples}\label{examples}

In this section, we give some examples of application of Theorems \ref{mildisweak} and \ref{rfieldsolution}. We focus on three well-known stochastic partial differential equations: the linear stochastic heat equation, the linear stochastic wave equation, and the linear Poisson equation, in all spatial dimensions. We restrict to the case of a symmetric $\alpha$-stable noise, as the choice of the parameter $\alpha \in (0,2)$ will be enough to capture the different cases.


\subsection{The stochastic heat equation}
Let $\nalpha W$ be an $\alpha$-stable symmetric noise on $\R_+\times \R^d$. The heat operator $\sH$ in dimension $d$ is a constant coefficient partial differential operator given by
$$\sH=\frac{\partial}{\partial t}-\sum_{i=1}^d \frac{\partial^2}{\partial x_i^2}\, .$$
A fundamental solution $\rho_\sH$ for this operator is given by the formula
\begin{equation*}
 \rho_\sH(t,x)=\frac{1}{\lp 4\pi t\rp^{\frac d 2}}\exp \lp -\frac{|x|^2}{4t} \rp \mathds 1_{t>0}\, .
\end{equation*}
We consider the following Cauchy problem
\begin{equation}\label{SHEalpha}
\left\{\begin{array}{l}  \sH u=\nalpha W , \\ u(0,\cdot)=0  . \end{array}\right.
\end{equation}


\subsubsection{Existence of a generalized solution}
We wish to define the generalized solution of this equation associated with the fundamental solution $\rho_\sH$.

\begin{prop}\label{rd9}
 For any choice of $\alpha \in (0,2)$ and $d\s 1$, the generalized solution to the linear stochastic heat equation driven by a symmetric $\alpha$-stable noise is well defined.
\end{prop}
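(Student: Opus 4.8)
The plan is to verify that Hypothesis \hyperlink{hyp1prime}{\textbf{(H1')}} holds for the heat kernel $\rho_\sH$ on $S=\R_+\times\R^d$ for every $\alpha\in(0,2)$ and every $d\s1$; by Definition \ref{gensol} this is exactly what it means for the generalized solution to be well defined. Fixing $\varphi\in\m D(\R^{d+1})$ with $\text{supp}\,\varphi\subset[a,b]\times B(0,M)$, I would work directly with the smooth function $\varphi*\check\rho_\sH$ rather than trying to invoke the sufficient conditions \eqref{alpha0}--\eqref{alphag1} of Theorem \ref{mildisweak}. That route actually fails here: a direct Gaussian computation gives $\int_0^T\int_{\R^d}\rho_\sH(s,y)^\alpha\dd y\,\dd s=c_{\alpha,d}\int_0^T s^{d(1-\alpha)/2}\dd s$, which diverges once $\alpha\s 1+2/d$, so for $d\s3$ condition \eqref{alpha0} is violated for $\alpha$ close to $2$. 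The content of the proposition is precisely that \hyperlink{hyp1prime}{\textbf{(H1')}} nonetheless holds, because convolution against a test function smooths out the $t\to0$ singularity of $\rho_\sH$.

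After the change of variables $(u,v)=(\tau+s,\xi+y)$ one writes, for $(\tau,\xi)\in S$,
$$\varphi*\check\rho_\sH(\tau,\xi)=\int_{\max(\tau,a)}^{b}\int_{|v|\I M}\rho_\sH(u-\tau,v-\xi)\,\varphi(u,v)\dd v\,\dd u\, .$$
Three features are then extracted. First, the integral vanishes unless $\tau\I b$, so $\varphi*\check\rho_\sH$ is supported, within $S$, in the bounded time strip $\{0\I\tau\I b\}$. Second, using $\int_{\R^d}\rho_\sH(s,\cdot)\dd y=1$ for every $s>0$, one obtains the uniform bound $|\varphi*\check\rho_\sH(\tau,\xi)|\I\nor{\varphi}\,(b-a)$; crucially this uses only the mass-one property, so the blow-up of $(4\pi s)^{-d/2}$ near $s=0$ causes no difficulty. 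Third, for the spatial decay one notes that for $|v|\I M$ and $|\xi|$ large one has $|v-\xi|\s|\xi|-M=:R$, and the elementary fact that $s\mapsto s^{-d/2}\exp(-R^2/(4s))$ is increasing on $(0,b]$ as soon as $R>\sqrt{2db}$ (its maximizer is $s^*=R^2/(2d)>b$) yields, for $|\xi|>M+\sqrt{2db}$, a bound of the form $|\varphi*\check\rho_\sH(\tau,\xi)|\I C\,e^{-(|\xi|-M)^2/(4b)}$.

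Combining these three facts, $\varphi*\check\rho_\sH$ is a bounded function, supported in a finite time strip and with Gaussian decay in the spatial variable, hence it belongs to $L^\alpha(\R_+\times\R^d)$ for every $\alpha>0$, in particular for every $\alpha\in(0,2)$ and every $d\s1$. This establishes \hyperlink{hyp1prime}{\textbf{(H1')}} and therefore the proposition. I expect the main technical obstacle to be the third step: one must control the interaction between the integrable-in-space singularity of $\rho_\sH$ at $s=0$ and the spatial tail, which is exactly where the monotonicity estimate for $s\mapsto s^{-d/2}e^{-R^2/(4s)}$ is needed; everywhere else the argument relies only on the mass-one normalization of the heat semigroup, which is what makes \hyperlink{hyp1prime}{\textbf{(H1')}} robust across all $\alpha$ and $d$.
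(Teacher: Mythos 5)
Your proof is correct and follows essentially the same route as the paper: both verify \hyperlink{hyp1prime}{\textbf{(H1')}} directly by showing that $\varphi*\check\rho_\sH$ has compact support in time, is bounded, and has Gaussian decay in space, with the key step in each case being the monotonicity of $s\mapsto s^{-d/2}e^{-R^2/(4s)}$ on the relevant time interval for $|x|$ large (the paper phrases this as the integrand attaining its maximum at $s=T$, and extracts the decay via $|y-x|^2\s\frac12|x|^2-|y|^2$ rather than $|v-\xi|\s|\xi|-M$, but these are the same estimate). Your preliminary observation that the $L^\alpha$-integrability of $\rho_\sH$ itself fails for $\alpha\s 1+2/d$, so that the smoothing by the test function is genuinely needed, is accurate and consistent with the paper's Proposition \ref{rd8}.
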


\begin{proof}
 We have to check for which combinations of $\alpha$ and $d$ the convolution $\varphi* \check \rho_\sH$ belongs to $L^\alpha(\R_+\times \R^d)$, for any $\varphi \in \m D(\R^{d+1})$ (see  \hyperlink{hyp1prime}{\textbf{(H1')}}). For $\varphi \in \m D(\R^{d+1})$ and $(t,x)\in \R\times \R^d$,
$$\varphi* \check \rho_\sH(t,x)= \int_t^{+\infty} \! \dd s \int_{\R^d} \! \dd y \frac{1}{\lp 4\pi (s-t)\rp^{\frac d 2}}\exp \lp -\frac{|y-x|^2}{4(s-t)} \rp \varphi (s,y)\, .$$
Since $\varphi$ has compact support, we see from this formula that there is a $T\in \R_+$ such that for any $t\s T$ and $x\in \R^d$, $\varphi* \check \rho_\sH(t,x)=0$. Therefore, we need to check that $\varphi* \check \rho_\sH$ is in $L^\alpha([0,T] \times \R^d)$ for any $T\in \R_+$ and $\varphi \in \m D(\R^{d+1})$. The function $\varphi* \check \rho_\sH$ is smooth, so we only need to check integrability for $x$ in a neighborhood of infinity. Then, for some compact $K \subset \R^d$, for $x$ large enough,
\begin{align*}
 |\varphi* \check \rho_\sH(t,x)| \I |\varphi|* \check \rho_\sH(t,x) &= \mathds 1_{t\I T} \int_t^{T} \! \dd s \int_{K} \! \dd y   \frac{1}{\lp 4\pi (s-t)\rp^{\frac d 2}}\exp \lp -\frac{|y-x|^2}{4(s-t)} \rp \left | \varphi (s,y)\right | \\
 &\I \mathds 1_{t\I T} \|\varphi \|_{\infty} \int_t^{T} \! \dd s \int_{K} \! \dd y   \frac{1}{\lp 4\pi (T-t)\rp^{\frac d 2}}\exp \lp -\frac{|y-x|^2}{4(T-t)} \rp \, ,
\end{align*}
where the second inequality comes from the fact that for $|x|$ large enough, the function $s \in [t, T] \mapsto  \frac{1}{\lp 4\pi (s-t)\rp^{\frac d 2}}\exp \lp -\frac{|y-x|^2}{4(s-t)}\rp$ is non-decreasing and realizes its maximum at $s=T$. Then, using the inequality
\begin{equation}\label{ineq1}
 |y-x|^2 \s \frac 1 2 |x|^2 -|y|^2 \, ,
\end{equation}
we get
\begin{align*}
  | \varphi * \check \rho_\sH(t,x)  | &\I \mathds 1_{t\I T}  \frac{\|\varphi \|_{\infty}}{\lp 4 \pi \rp^{\frac d 2 }} (T-t)^{-\frac d 2 +1} \int_{K} \! \dd y  \exp \lp -\frac{|y-x|^2}{4(T-t)} \rp \\
 &\I \mathds 1_{t\I T}  \frac{\|\varphi \|_{\infty}}{\lp 4 \pi \rp^{\frac d 2 }} (T-t)^{-\frac d 2 +1} \exp\lp -\frac{|x|^2}{8(T-t)} \rp \int_K \exp \lp-\frac{|y|^2}{4(T-t)} \rp  \dd y \, .
\end{align*}
We evaluate the integral and deduce that
\begin{align}\label{majoration}
   | \varphi * \check \rho_\sH(t,x) |  &\I  \mathds 1_{t\I T} \|\varphi \|_{\infty}T \exp\lp -\frac{|x|^2}{8T}\rp \, .
\end{align}
From \eqref{majoration} we deduce that $\varphi* \check \rho_\sH$ has compact support in the time variable (uniformly with respect to the space variable), and has rapid decay in the space variable. Therefore $\varphi* \check \rho_\sH\in L^\alpha([0,T]\times \R^d)$ for any $\alpha \in \R_+$. We deduce that the stochastic linear heat equation driven by symmetric $\alpha$-stable noise always has a generalized solution $u_{\text{gen}}$ defined by
\begin{equation}\label{solutionSHE}
 \scal{u_{\text{gen}}} \varphi := \scal{\dot W^\alpha}{\varphi* \check \rho_\sH} \, , \qquad \text{for all} \ \varphi \in \m D(\R^{d+1}).
\end{equation}
\end{proof}
\begin{rem}
 The previous proof is still valid if we formally replace $\alpha$ by $2$, and therefore the same result is true in the Gaussian case.
\end{rem}

\begin{rem}
 From \eqref{majoration}, we get that
\begin{equation*}
 \| \varphi * \check\rho_\sH\|_{L^\alpha([0,T]\times \R^d)} \I C \| \varphi \|_\infty\, ,
\end{equation*}
for some constant $C$ that depends on the support of $\varphi$. Therefore, if $\varphi_n$ is a sequence of test functions in $\m D(\R^{d+1})$ such that $\varphi_n \to 0$ in $\m D(\R^{d+1})$, then
\begin{equation*}
 \E \lc e^{i\xi \scal{u_{\text{gen}}}{\varphi_n}}\rc=e^{-|\xi|^\alpha  \| \varphi_n * \check\rho_\sH\|_{L^\alpha([0,T]\times \R^d)}^\alpha}\to 1 \, , \qquad \text{as} \ n \to +\infty\, .
\end{equation*}
Therefore, $\scal{u_{\text{gen}}}{\varphi_n}\to 0$ in law as $n \to +\infty$, and since convergence in law to a constant is equivalent to the convergence in probability to this constant, we deduce that $\scal{u_{\text{gen}}}{\varphi_n}\to 0$ in probability as $n\to +\infty$. Therefore, $u_{\text{gen}}$ defines a linear functional on $\m D(\R^{d+1})$ that is continuous in probability. The space $\m D(\R^{d+1})$ is nuclear (see \cite[p. 510]{treves}), so by \cite[Corollary 4.2]{spdewalsh}, $u_{\text{gen}}$ has an almost surely continuous version (and therefore $u_{\text{gen}}$ defines a continuous generalized stochastic process).
\end{rem}


\subsubsection{Existence of a mild solution}
The criterion for the existence of the mild solution to the  linear stochastic heat equation \eqref{SHEalpha} is known (see \cite{balan}). However, we can also obtain this from \hyperlink{hyp2prime}{\textbf{(H2')}}.
\begin{prop}\label{rd8}
  The mild solution to the linear stochastic heat equation driven by a symmetric $\alpha$-stable noise, as defined in \eqref{def}, exists if and only if
  \begin{equation}\label{alphacondition}
 \alpha < 1+\frac 2 d\, .
\end{equation}
In this case,
\begin{equation}\label{solutionmildheat}
 u_{\text{mild}}(t,x):= \scal{\dot W^\alpha}{\rho_\sH(t-\cdot, x-\cdot)}\, .
\end{equation}
\end{prop}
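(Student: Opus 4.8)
The plan is to reduce the existence of the mild solution to the integrability condition \hyperlink{hyp2prime}{\textbf{(H2')}}, and then to compute the relevant $L^\alpha$ norm explicitly. By Definition \ref{milddef} together with \hyperlink{hyp2prime}{\textbf{(H2')}}, the mild solution $u_{\text{mild}}(t,x)$ defined in \eqref{solutionmildheat} exists precisely when, for every $(t,x)\in \R_+\times\R^d$, the function $(r,z)\mapsto \rho_\sH(t-r,x-z)$ belongs to $L^\alpha(\R_+\times\R^d)$. So the whole proposition amounts to deciding for which pairs $(\alpha,d)$ the quantity
\begin{equation*}
 \int_0^{+\infty}\!\!\int_{\R^d} |\rho_\sH(t-r,x-z)|^\alpha \dd z\,\dd r
\end{equation*}
is finite. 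Note that for $t\I 0$ the integrand vanishes identically (since then $t-r<0$), so \hyperlink{hyp2prime}{\textbf{(H2')}} holds trivially there, and the only interesting range is $t>0$.

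First I would use the factor $\mathds 1_{t-r>0}$ in the definition of $\rho_\sH$ to restrict the time integration to $r\in(0,t)$, and change variables $u:=t-r$, so that the integral becomes $\int_0^t\!\int_{\R^d}(4\pi u)^{-\alpha d/2}\exp\lp-\alpha|x-z|^2/(4u)\rp\dd z\,\dd u$. The inner spatial integral is a Gaussian integral: after the translation $w:=z-x$ it equals $(4\pi u/\alpha)^{d/2}$ and, crucially, does not depend on $x$. Substituting this back, the whole expression reduces to a constant multiple of $\int_0^t u^{(d/2)(1-\alpha)}\dd u$, where the constant depends only on $\alpha$ and $d$.

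The last step is the elementary criterion that $\int_0^t u^{\beta}\dd u<+\infty$ if and only if $\beta>-1$. With $\beta=\tfrac d2(1-\alpha)$ this is exactly the condition $\alpha<1+\tfrac2d$, which yields one direction: under \eqref{alphacondition} the norm is finite for every $(t,x)$, so \hyperlink{hyp2prime}{\textbf{(H2')}} holds and $u_{\text{mild}}$ is well defined. For the converse I would observe that when $\alpha\s 1+\tfrac2d$ the same integral diverges at $u=0$ for every $t>0$, so $\rho_\sH(t-\cdot,x-\cdot)\notin L^\alpha(\R_+\times\R^d)$ and the mild solution cannot be defined.

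I expect no serious obstacle here: the argument is a direct computation, and the only points requiring care are tracking the exponent of $u$ after the Gaussian integration and observing that the spatial translation leaves the norm independent of $x$. This makes the singularity of the heat kernel at $u=0$ the sole source of (non)integrability, so that the borderline $\alpha=1+\tfrac2d$ falls on the divergent side and the strict inequality in \eqref{alphacondition} is correct.
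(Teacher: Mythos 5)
Your proposal is correct and follows essentially the same route as the paper: reduce existence to \hyperlink{hyp2prime}{\textbf{(H2')}}, shift variables, evaluate the Gaussian spatial integral to get $(4\pi u/\alpha)^{d/2}$ independently of $x$, and read off finiteness of $\int_0^t u^{(d/2)(1-\alpha)}\,\mathrm{d}u$ as the condition $\alpha<1+\tfrac 2d$. The exponent bookkeeping and the divergence at the borderline case are exactly as in the paper's computation \eqref{rd7}.
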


\begin{proof}
The mild solution of \eqref{SHEalpha} associated with $\rho_\sH$ is well defined if and only if the following integral is finite for any $(t,x)\in \R_+ \times \R^d$ (see  \hyperlink{hyp2prime}{\textbf{(H2')}}):
\begin{align}\nonumber
 \int_{\R_+} \! \dd s \int_{\R^d} \! \dd y \, \rho_\sH (t-s,x-y)^\alpha &= \int_0^t \! \dd s \frac{1}{\lp 4\pi s\rp^{\alpha \frac d 2}} \int_{\R^d} \! \dd y \exp \lp -\frac{\alpha |y|^2}{4s} \rp \\
 &= \int_0^t \! \dd s \frac{1}{\lp 4\pi s\rp^{ \frac d 2\lp \alpha - 1 \rp} \alpha^{\frac d 2}} \, ,
 \label{rd7}
\end{align}
and the last integral is finite if and only if
\begin{equation*}
 \alpha < 1+\frac 2 d\, .
\end{equation*}
In this case, by Definition \ref{milddef},
\begin{equation*}
 u_{\text{mild}}(t,x):= \scal{\dot W^\alpha}{\rho_\sH(t-\cdot, x-\cdot)}\, .
\end{equation*}
\end{proof}


\subsubsection{Existence of a random field solution}

We have seen in the previous section that for any $\alpha$ and $d$, it is possible to define the generalized solution $u_{\text{gen}}$, and that the mild solution $u_{\text{mild}}$ exists if and only if $\alpha < 1+ \frac 2 d$. We now apply the results of Theorem \ref{mildisweak} and Theorem \ref{rfieldsolution} to learn more about the relations between those two notions of solution.
\begin{prop}\label{rd10}
The generalized solution $u_{\text{gen}}$ to the linear stochastic heat equation driven by a symmetric $\alpha$-stable noise has a random field representation $Y$ if and only if \eqref{alphacondition} is satisfied, and in that case, this random field representation $Y$ is equal to $u_{\text{mild}}$ almost everywhere almost surely.
\end{prop}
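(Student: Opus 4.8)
The plan is to prove the two implications separately, drawing the necessary condition from Theorem \ref{rfieldsolution} and the sufficiency from Theorem \ref{mildisweak}, while feeding in the explicit estimates of Propositions \ref{rd9} and \ref{rd8}. For necessity, suppose $u_{\text{gen}}$ has a random field representation $Y$. Proposition \ref{rd9} guarantees \hyperlink{hyp1prime}{\textbf{(H1')}}, so Theorem \ref{rfieldsolution} applies and gives both that $\rho_\sH((t,x)-\cdot)\in L^\alpha(\R_+\times\R^d)$ for almost every $(t,x)$ and, by \eqref{mildresult}, that $Y=u_{\text{mild}}$ a.e.\ a.s. But the computation \eqref{rd7} shows that $\int_{\R_+\times\R^d}\rho_\sH((t,x)-(s,y))^\alpha\dd s\dd y$ is finite for one, equivalently every, $(t,x)$ with $t>0$ precisely when $\alpha<1+\tfrac2d$. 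Thus the a.e.\ validity of \hyperlink{hyp2prime}{\textbf{(H2')}} already forces \eqref{alphacondition}, which proves the ``only if'' direction and, simultaneously, the identification $Y=u_{\text{mild}}$.

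For sufficiency, assume \eqref{alphacondition}. Then $u_{\text{mild}}$ exists by Proposition \ref{rd8}, and it remains only to verify the appropriate condition of Theorem \ref{mildisweak}, whose conclusion $u_{\text{mild}}=u_{\text{gen}}$ exhibits $u_{\text{mild}}$ as a random field representation. Two explicit quantities drive the estimates. First, \eqref{rd7} gives $\|\rho_\sH((t,x)-\cdot)\|_{L^\alpha}^\alpha=c_{\alpha,d}\,t_+^{\,1-\frac d2(\alpha-1)}$, with positive exponent exactly under \eqref{alphacondition}; second, the estimate leading to \eqref{majoration}, applied to $|\varphi|$, shows that $h:=|\varphi|*\check\rho_\sH$ is bounded, supported in $s\in[0,T]$, and satisfies $h(s,y)\I C\,e^{-|y|^2/(8T)}$ for $\text{supp}\,\varphi\subset(-\infty,T]\times\R^d$. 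When $\alpha>1$, the map $(t,x)\mapsto\|\rho_\sH((t,x)-\cdot)\|_{L^\alpha}$ is a locally bounded power of $t_+$, hence locally integrable, which is \eqref{alpha0} in the form of Remark \ref{mildisweakrem}(1). When $\alpha<1$, the boundedness and Gaussian decay of $h$ give $\int_S h^\alpha<+\infty$, which is \eqref{alphag1} in the form of Remark \ref{mildisweakrem}(2).

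The delicate case, and the one I expect to be the main obstacle, is $\alpha=1$, where one must verify the logarithmic condition \eqref{alpha1}. Here $\int_S\rho_\sH((t,x)-(s,y))\dd s\dd y=t_+$, so the two denominators appearing in the logarithm are $t_+$ and $h(s,y)$, while the outer factor $N:=\int_{\R^{d+1}}|\varphi(t,x)|\,t_+\,\dd t\dd x$ is a finite constant. I would bound the logarithm by subadditivity, $\log_+\!\big(\tfrac{\rho_\sH\,N}{t\,h}\big)\I\log_+\rho_\sH+\log_+\tfrac Nt+\log_+\tfrac1h$, and treat the three resulting pieces. The $\log_+(N/t)$ piece integrates to $\int_{\R^{d+1}}|\varphi(t,x)|\,t\log_+(N/t)\dd t\dd x<+\infty$, since $t\log_+(N/t)\to0$ as $t\to0^+$; the $\log_+\rho_\sH$ piece, after the substitution $\tau=t-s$, $\eta=x-y$, reduces to the Gaussian entropy integral $\int_0^t\int_{\R^d}\rho_\sH\log_+\rho_\sH\,\dd\eta\,\dd\tau$, which is finite because the inner integral is $\I\tfrac d2\log_+\tfrac1{4\pi\tau}$ and $\int_0^t\log_+(1/\tau)\dd\tau<+\infty$.

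The genuinely dangerous piece is $\int_{\R^{d+1}}|\varphi(t,x)|\int_S\rho_\sH((t,x)-(s,y))\log_+\tfrac1{h(s,y)}\dd s\dd y\dd t\dd x$, since $1/h$ is unbounded where $h$ is small and no pointwise lower bound on $h$ is available. The key observation that removes this difficulty is that integrating the weight $|\varphi(t,x)|\,\rho_\sH((t,x)-(s,y))$ in $(t,x)$ reconstitutes $h$ itself, namely $\int_{\R^{d+1}}|\varphi(t,x)|\rho_\sH((t,x)-(s,y))\dd t\dd x=h(s,y)$. Hence, by Fubini's theorem, this piece equals $\int_S h(s,y)\log_+\tfrac1{h(s,y)}\dd s\dd y$; since $u\mapsto u\log_+(1/u)$ is bounded by $1/e$ and is $O\big(|y|^2e^{-|y|^2/(8T)}\big)$ for large $|y|$ by the upper bound on $h$, while $s$ ranges over the bounded interval $[0,T]$, this last integral is finite. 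Therefore \eqref{alpha1} holds, Theorem \ref{mildisweak}(ii) applies, and $u_{\text{mild}}=u_{\text{gen}}$, which completes the sufficiency and hence the proof.
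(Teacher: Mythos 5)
Your proposal is correct and follows essentially the same route as the paper: necessity and the identification $Y=u_{\text{mild}}$ via Theorem \ref{rfieldsolution} together with the computation \eqref{rd7}, and sufficiency via Theorem \ref{mildisweak}, checking \eqref{alpha0} from \eqref{rd7} for $\alpha>1$, \eqref{alphag1} from \eqref{majoration} for $\alpha<1$, and the logarithmic condition \eqref{alpha1} for $\alpha=1$ by splitting the $\log_+$ into the same pieces the paper labels $I_1$--$I_4$. The only difference is cosmetic: your $\alpha=1$ argument (subadditivity of $\log_+$, the identity $\int|\varphi|\,\rho_\sH=h$ to reduce the $\log_+(1/h)$ piece to $\int_S h\log_+(1/h)$, and the Gaussian entropy bound) writes out in full the estimates that the paper delegates to \cite[Proof of Proposition 4.4.4]{humeau1}.
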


\begin{proof}
If $\alpha \in \lp 1 , 1+\frac d 2 \rp$, then from Theorem \ref{mildisweak}\textit{(i)}, we deduce that $u_{\text{mild}}$ is almost surely equal to $u_{\text{gen}}$ (the condition \eqref{alpha0} is immediately verified using \eqref{rd7} and Remark \ref{mildisweakrem}(1)).
Similarly, for any $\varphi \in \m D(\R^{d+1})$, if $\alpha<1$, then by \eqref{majoration}, $|\check \rho_\sH|*|\varphi| \in L^\alpha(\R_+\times \R^d)$, hence by Theorem \ref{mildisweak}\textit{(iii)}, the mild solution of the stochastic heat equation $u_{\text{mild}}$ is equal to the generalized solution $u_{\text{gen}}$.

The case $\alpha=1$ is slightly more involved, since we need to check condition \eqref{alpha1}. Let $\varphi \in \m D(\R^{d+1})$. First, we have
$$\int_{\R_+\times \R^d}  \rho_\sH(t-s, x-v) \dd v=t \mathds 1_{t> 0} \, ,$$
and for any $x\in \R_+$, $\log_+(x) \I | \log (x) |$, therefore, for $t>0$
\begin{align*}
  \log_+ & \lp \frac{ \rho_\sH(t-s,x-y) \int_{\R^{d+1}}\int_{\R_+\times \R^d} | \rho_\sH(u-v, r-w)| \dd v \dd w \, \mu_\varphi(\!\dd u, \! \dd r)   }{\lp\int_{\R_+\times \R^d}  \rho_\sH(t-v,x-w) \dd v \dd w \rp\lp \int_{\R^{d+1}}  \rho_\sH( u-s, r-y)  \mu_\varphi( \!\dd u, \! \dd r)\rp} \rp \\
  &\hspace{1cm} \I \left | \log \lp  \rho_\sH(t-s,x-y) \rp \right | +  \left | \log \lp \int_{\R^{d+1}} u \mu_\varphi(\!\dd u, \! \dd r) \rp \right |+ \left | \log (t) \right | \\
  & \hspace{2cm} + \left | \log \lp  \check \rho_\sH * | \varphi | (s,y) \rp \right | \, .
\end{align*}
Hence, to have \eqref{alpha1}, we need to check the finiteness of the following integrals:
\begin{align*}
 I_I&:= \int_{\R_+\times \R^d}\lp \check \rho_\sH* |\varphi| \rp (s,y) \dd s \dd y \, ,\\
 I_2&:=\int_{\R_+\times \R^d} \left(\lp \check \rho_\sH\left | \log \lp \check \rho_\sH\rp \right | \rp* |\varphi|\right) (s,y) \dd s \dd y\, ,\\
 I_3&:=\int_{\R_+\times \R^d} \lp \int_{\R^{d+1}}   \rho_\sH (t-s,x-y) |\log(t) \varphi(t,x) | \dd t \dd x \rp \dd s \dd y \, ,\\
 I_4&:=\int_{\R_+\times \R^d}  \left | \log \lp \check \rho_\sH * | \varphi | (s,y) \rp \right |\lp \check \rho_\sH * | \varphi |\rp (s,y) \dd s \dd y\, .
\end{align*}
The case of $I_1$ has already been treated after \eqref{majoration}, and for $I_3$, we can simply permute the integrals and get
$$I_3=\int_{\R^{d+1}} | t \mathds 1_{t> 0} \log(t) \varphi(t,x) |  \dd t \dd x<+\infty \, .$$
For $I_2$ and $I_4$, by the same considerations as for the case $\alpha\neq 1$, we need to check that for any $\varphi \in \m D(\R^{d+1})$,
\begin{align*}
 &(t,x)\in \R_+\times \R^d \mapsto \left | \check\rho_\sH \log (\check\rho_\sH)  \right | * |\varphi| (t,x) \, ,
 \end{align*}
 and
 \begin{align*}
 \ &(t,x)\in \R_+\times \R^d \mapsto \lp \check\rho_\sH * |\varphi| \rp (t,x) \left | \log (\check\rho_\sH * | \varphi | (t,x)) \right |\, ,
\end{align*}
are in $L^1([0,T]\times \R^d)$ for any $T\in \R_+$. By \eqref{majoration}, we get that $\lp \check\rho_\sH * |\varphi| \rp \left | \log (\check\rho_\sH * | \varphi | ) \right | \in L^1(\R_+ \times \R^d)$, therefore $I_4 < +\infty$. We now turn to $I_2$. Observe that
\begin{align*}
  \left | \check\rho_\sH \log (\check\rho_\sH)  \right |*| \varphi|(t,x)&= \mathds 1_{t\I T}\int_t^{T} \! \dd s \int_{K} \! \dd y \frac{1}{\lp 4\pi (s-t)\rp^{\frac d 2}}\exp \lp -\frac{|y-x|^2}{4(s-t)} \rp \\
 & \hspace{3cm} \times \left | -\frac d 2 \log \lp 4 \pi (s-t) \rp -\frac{|y-x|^2}{4(s-t)} \right | |\varphi (s,y)|\, .
\end{align*}
Again, by continuity (since $|\varphi|$ is continuous and has compact support), we are only concerned about integrability near a neighborhood of infinity ($x \to \pm \infty$, $t \in [0,T]$). Since $\log \lp 4 \pi (s-t) \rp$ is integrable at $s=t$ and the polynomial term $|y-x|^2/(s-t)$ barely affects the decay as $x \to \pm \infty$ of $\exp \lp -|y-x|^2/(4(s-t)) \rp$, we can obtain a bound similar to \eqref{majoration}: see \cite[Proof of Proposition 4.4.4]{humeau1} for details.

Hence by Theorem \ref{mildisweak}\textit{(ii)}, the mild solution $u_{\text{mild}}$ of the stochastic heat equation in the case $\alpha=1$ is also equal to $u_{\text{gen}}$.

Furthermore, if $u_{\text{gen}}$ has a random field representation $Y$ in the sense of Definition \ref{rfrepresentation}, then, by Theorem \ref{rfieldsolution}, necessarily $\rho_\sH \in L^\alpha ([0,T] \times \R^d)$ for any $T>0$, which, as we have seen in the proof of Proposition \ref{rd8}, is equivalent to \eqref{alphacondition}, and also the random field representation $Y$ is equal to the mild solution $u_{\text{mild}}$ almost everywhere a.s. Therefore, a necessary and sufficient condition for the existence of a random field solution to the stochastic heat equation \eqref{SHEalpha} is that $\alpha<1+\frac 2 d$.
\end{proof}

Propositions \ref{rd9}, \ref{rd8} and \ref{rd10} together establish the following theorem.

\begin{theo}\label{heat_result}
 The generalized solution $u_{\text{gen}}$ to the stochastic heat equation \eqref{SHEalpha} defined by \eqref{solutionSHE} always exists. The mild solution $u_{\text{mild}}$ defined by \eqref{solutionmildheat} exists if and only if
\begin{align} \label{alphacond}
 \alpha<1+\frac 2 d\, ,
\end{align}
Furthermore, a random field representation $Y$ of the generalized solution exists if and only if \eqref{alphacond} is satisfied and in this case, for almost all $(t,x) \in \R_+\times \R^d$,
\begin{equation*}
 Y_{t,x} =\scal{\dot W^\alpha}{\rho_\sH(t-\cdot, x-\cdot)}= u_{\text{mild}}(t,x) \qquad \text{a.s.} 
\end{equation*}
\end{theo}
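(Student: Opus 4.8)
The plan is to recognize that Theorem \ref{heat_result} is not a fresh result but a direct synthesis of the three preceding statements, Propositions \ref{rd9}, \ref{rd8} and \ref{rd10}; each of these addresses exactly one clause of the theorem, so the substantive work has already been done, and the task reduces to matching each assertion to its source and checking that the cases line up consistently.

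First, I would invoke Proposition \ref{rd9} for the claim that the generalized solution $u_{\text{gen}}$ exists for every $\alpha \in (0,2)$ and every $d \s 1$. That proposition verifies Hypothesis \hyperlink{hyp1prime}{\textbf{(H1')}} by producing the pointwise bound \eqref{majoration}, from which $\varphi * \check \rho_\sH \in L^\alpha([0,T]\times \R^d)$ for every $\varphi \in \m D(\R^{d+1})$ and every $\alpha \in \R_+$; formula \eqref{solutionSHE} then defines $u_{\text{gen}}$. Next, the existence criterion for the mild solution together with its explicit form \eqref{solutionmildheat} is precisely Proposition \ref{rd8}: there the computation \eqref{rd7} of $\int_{\R_+\times \R^d} \rho_\sH(t-s,x-y)^\alpha \dd s \dd y$ shows that Hypothesis \hyperlink{hyp2prime}{\textbf{(H2')}} holds for every $(t,x)$ if and only if $\alpha < 1 + \frac 2 d$, which is exactly \eqref{alphacond}.

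Finally, the equivalence between the existence of a random field representation $Y$ of $u_{\text{gen}}$ and condition \eqref{alphacond}, together with the identification $Y_{t,x} = u_{\text{mild}}(t,x)$ almost everywhere almost surely, is Proposition \ref{rd10}. I would simply cite it, noting that internally it combines the forward direction (when \eqref{alphacond} holds, Theorem \ref{mildisweak} gives $u_{\text{mild}} = u_{\text{gen}}$, so $u_{\text{mild}}$ is the sought random field) with the converse via Theorem \ref{rfieldsolution} (a random field representation forces $\rho_\sH(t-\cdot) \in L^\alpha(S)$ for a.e.\ $t$, hence \eqref{alphacond}). Gathering these three citations yields every clause of the statement.

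Since the theorem is a pure assembly, there is no genuine obstacle at this level; the only nontrivial point lying underneath it is the borderline case $\alpha = 1$ inside Proposition \ref{rd10}, where one must verify the logarithmic integrability condition \eqref{alpha1}. That step reduces to the finiteness of the four integrals $I_1$ through $I_4$ and, in particular, to controlling $|\check\rho_\sH \log(\check\rho_\sH)| * |\varphi|$ near spatial infinity by a Gaussian bound analogous to \eqref{majoration}. Apart from that single estimate, the proof of the theorem is routine bookkeeping across the three propositions.
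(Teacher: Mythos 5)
Your proposal is correct and matches the paper exactly: the paper itself proves Theorem \ref{heat_result} by simply stating that Propositions \ref{rd9}, \ref{rd8} and \ref{rd10} together establish it, which is precisely your assembly of the three clauses. Your added remark about the $\alpha=1$ borderline case correctly identifies the only technically delicate point, which the paper handles inside the proof of Proposition \ref{rd10}.
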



\subsection{The stochastic wave equation}
We now consider the stochastic wave equation.
For an overview of this SPDE in the Gaussian case, see \cite{minicourse}. Let $\nalpha W$ be an $\alpha$-stable symmetric noise on $\R_+\times \R^d$. The wave operator $\sO$ in dimension $d$ is a constant coefficient partial differential operator given by
$$\sO=\frac{\partial^2}{\partial t^2}-\sum_{i=1}^d \frac{\partial^2}{\partial x_i^2}\, .$$
The fundamental solution of this operator (with support in the forward light cone) is a function only in dimension one and two. In dimension one, it is given by
\begin{equation*}
 \rho_1^\sO(t,x)=\frac 1 2 \mathds 1_{|x|\I t}\qquad \text{for all} \ (x,t)\in \R^2\, ,
\end{equation*}
and, in dimension two, by
\begin{equation*}
 \rho_2^\sO(t,x)=\frac 1 {2\pi} \frac 1 {\sqrt{t^2-|x|^2}} \mathds 1_{|x|<t}\qquad \text{for all} \ (t,x)\in \R\times \R^2\, .
\end{equation*}
In dimension $d\s 3$, the fundamental solution is a distribution that can be characterized by its Fourier transform in the space variable $x$ (see \cite{dalang99}).

This fundamental solution is related to the following Cauchy problem:
\begin{equation}\label{SWE}
\left\{\begin{array}{l}  \sO u=\nalpha W\, , \\ u(0,\cdot)=0\, , \\
\frac{\partial u}{\partial t}(0, \cdot)=0\, . \end{array}\right.
\end{equation}


\subsubsection{Existence of the generalized solution}
We first study the existence of the generalized solution in various dimensions $d\s 1$.

\begin{prop}\label{rdprop6.7}
 For any dimension $d\s 1$ and $\alpha \in (0,2)$, the generalized solution $u_{\text{gen}}$ to the linear stochastic wave equation driven by a symmetric $\alpha$-stable noise exists.
 \end{prop}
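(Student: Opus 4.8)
The plan is to verify Hypothesis \hyperlink{hyp1prime}{\textbf{(H1')}}, which by Definition \ref{gensol} is exactly the condition guaranteeing that the generalized solution $u_{\text{gen}}$ is well defined: for every $\varphi \in \m D(\R^{d+1})$ one must show that $\varphi * \check\rho^\sO \in L^\alpha(\R_+\times \R^d)$, where $\rho^\sO$ denotes the forward fundamental solution of $\sO$ (given explicitly by $\rho_1^\sO$ and $\rho_2^\sO$ in dimensions one and two, and as a distribution supported in the forward light cone for $d \s 3$). Fix such a $\varphi$ and choose $T_0, R_0 > 0$ with $\text{supp}\, \varphi \subset [-T_0, T_0] \times \overline{B}(0, R_0)$. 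Recall from \eqref{rd2} that $\varphi * \check\rho^\sO(t,x) = \scal{\rho^\sO}{\varphi(t+\cdot, x+\cdot)}$, which defines a $C^\infty$ function on $\R^{d+1}$; in particular it is locally integrable, so only its behavior near infinity is at issue.

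The central observation is that finite speed of propagation forces this convolution to have compact support once restricted to $S = \R_+\times \R^d$. Indeed, $\varphi * \check\rho^\sO(t,x) \neq 0$ requires the existence of $(s,y) \in \text{supp}\, \rho^\sO$ with $(t+s, x+y) \in \text{supp}\, \varphi$. Since $\rho^\sO$ is supported in the forward light cone, $(s,y) \in \text{supp}\,\rho^\sO$ yields $0 \I |y| \I s$, while $(t+s, x+y) \in \text{supp}\,\varphi$ yields $t + s \I T_0$ and $|x+y| \I R_0$. For $t \s 0$ these constraints give $s \I T_0 - t \I T_0$, hence $|y| \I T_0$ and therefore $|x| \I |x+y| + |y| \I R_0 + T_0$, together with $t \I T_0$. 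Thus the restriction of $\varphi * \check\rho^\sO$ to $S$ is supported in the compact set $[0, T_0] \times \overline{B}(0, R_0 + T_0)$.

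Being smooth and compactly supported on $S$, the function $\varphi * \check\rho^\sO$ lies in $L^\alpha(S)$ for every $\alpha > 0$, in particular for all $\alpha \in (0,2)$; this verifies \hyperlink{hyp1prime}{\textbf{(H1')}} and hence the existence of $u_{\text{gen}}$ in every dimension $d \s 1$. I expect the only delicate point to be the case $d \s 3$: there $\rho^\sO$ is a genuine distribution rather than a locally integrable function, so one cannot perform a direct integral over the cone as in the heat-equation estimate leading to \eqref{majoration}. Instead the support bound must be argued abstractly, using $\text{supp}(\varphi * \check\rho^\sO) \subset \text{supp}\,\varphi - \text{supp}\,\rho^\sO$ together with the known forward-light-cone support of the wave fundamental solution; the geometric intersection argument above is precisely what makes the conclusion uniform across all dimensions.
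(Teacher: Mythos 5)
Your proof is correct and follows essentially the same route as the paper: verify \hyperlink{hyp1prime}{\textbf{(H1')}} by showing that $\varphi*\check\rho^\sO$ is smooth and that its restriction to $\R_+\times\R^d$ has compact support, as a consequence of the forward-light-cone support of $\rho^\sO$ and the compact support of $\varphi$. Your unified support-intersection argument is in fact slightly cleaner than the paper's dimension-by-dimension treatment (and correctly notes that compactness of the support only holds after restricting to $S=\R_+\times\R^d$), but the underlying idea is identical.
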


\begin{proof}
We need to check whether  \hyperlink{hyp1prime}{\textbf{(H1')}} is satisfied.
\vskip 12pt

\noindent
\underline{$d=1$:}
We need to check that for any $\varphi \in \m D(\R^2)$, the convolution $\varphi*\check \rho^\sO_1 $ is in $L^\alpha(\R_+\times \R)$. We get
$$\varphi*\check \rho^\sO_1 (t,x)= \int_0^{+\infty} \! \dd s \int_{-s}^{s} \! \dd y \,  \varphi (s+t,y+x) \, ,$$
and we can see from this expression that it is a smooth function with compact support, hence in $L^{\alpha}(\R_+\times \R)$.
\vskip 12pt

\noindent
\underline{$d=2$:}
Let $\varphi \in \m D(\R^3)$. We check whether or not for some $\alpha \in (0,2)$, the function $\varphi*\check \rho^\sO_2  \in L^\alpha(\R_+\times \R^d)$. By standard properties of the convolution, $\varphi*\check \rho^\sO_2$ is a smooth function. Let $(t,x)\in \R_+\times \R^2$. Then,
$$\varphi*\check \rho^\sO_2  (t,x)=\int_\R \! \dd s \int_{\R^2} \! \dd y \rho^\sO_2(s-t,y-x) \varphi(s,y)\, .$$
Since $\varphi$ has compact support and $\rho^\sO_2$ has support in the set $\{(t,x) \in \R_+\times \R^2 : |x|\I t\}$, we can write
$$\varphi*\check \rho^\sO_2 (t,x)=\mathds 1_{t\I T}\int_t^T \! \dd s \int_{B_x(T-t)} \! \dd y \rho^\sO_2(s-t,y-x) \varphi(s,y)\, ,$$
for some $T\in \R_+$, where $B_x(r)$ is the open ball of radius $r$ centered at $x$. We see in this expression that the convolution has compact support in space and time, since if $x$ is far enough from the support of $\varphi$, the integrand is zero. We deduce that for any $\alpha \in (0,2)$, $\varphi*\check \rho^\sO_2  \in L^\alpha(\R_+\times \R^d)$, and the generalized solution to the stochastic linear wave equation in dimension $2$ always exists.
\vskip 12pt

\noindent
\underline{$d\s3$:} For any $\varphi \in \m D(\R\times \R^d)$, the function $\varphi*\check \rho^\sO_d$ is smooth.
By the same type of considerations on the support of the convolution $\varphi*\check \rho^\sO_d$ as in dimension one and two, we see that this function has compact support, therefore $\varphi*\check \rho^\sO_d\in L^\alpha(\R_+\times \R^d)$ for any $\alpha \in (0,2)$.
\vskip 12pt

We conclude that the generalized solution always exists.
\end{proof}

\subsubsection{Existence of the mild solution}

\begin{prop}\label{rdprop6.8}
For all $\alpha\in (0,2)$, the mild solution to the stochastic wave equation driven by a symmetric $\alpha$-stable noise exists only in dimensions one and two.
\end{prop}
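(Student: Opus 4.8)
The plan is to read off existence directly from Definition~\ref{milddef}: for a symmetric $\alpha$-stable noise the mild solution exists if and only if Hypothesis \hyperlink{hyp2prime}{\textbf{(H2')}} holds, i.e. $\rho^\sO_d(t-\cdot,x-\cdot)\in L^\alpha(\R_+\times \R^d)$ for every $(t,x)\in \R_+\times \R^d$. Since $\rho^\sO_d$ is supported in the forward light cone, for fixed $(t,x)$ the function $(s,y)\mapsto \rho^\sO_d(t-s,x-y)$ restricted to $S=\R_+\times\R^d$ lives on $\{0\I s\I t\}$, and the change of variables $(u,w)=(t-s,x-y)$, which uses only translation invariance of Lebesgue measure, reduces \hyperlink{hyp2prime}{\textbf{(H2')}} to the finiteness of
$$\int_0^t \int_{\R^d} \left| \rho^\sO_d(u,w) \right|^\alpha \dd w \dd u$$
for every $t>0$. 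I would then treat the dimensions $d=1,2$ (existence) and $d\s 3$ (non-existence) separately.

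For $d=1$ the kernel $\rho^\sO_1$ is bounded with compactly supported slices, so the integral is $2^{-\alpha}\int_0^t 2u\,\dd u<+\infty$ for every $\alpha$, giving existence. For $d=2$ I would pass to polar coordinates in $w$ and substitute $v=u^2-|w|^2$, turning the spatial integral into a constant multiple of $\int_0^{u^2} v^{-\alpha/2}\,\dd v$; this converges \emph{precisely} because $\alpha<2$, producing a spatial factor of order $u^{2-\alpha}$ which is integrable on $[0,t]$ since $2-\alpha>0$. Hence the mild solution exists in dimensions one and two for all $\alpha\in(0,2)$.

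The crux, and the step I expect to be the main obstacle, is the case $d\s 3$. Here the plan is to invoke the classical fact (recalled with the statement of \eqref{SWE} and in \cite{dalang99}) that the forward fundamental solution $\rho^\sO_d$ is no longer a function but a genuine distribution: in $d=3$ it is a surface measure carried by the cone $\{|x|=t\}$, a set of Lebesgue measure zero, and in higher dimensions it is obtained by differentiating such singular objects. Since \hyperlink{hyp2prime}{\textbf{(H2')}} requires $\rho^\sO_d(t-\cdot,x-\cdot)$ to belong to $L^\alpha(S)$, in particular to be representable by an ordinary function, it cannot hold when $\rho^\sO_d$ is not a function, and the defining formula \eqref{def} for $u_{\text{mild}}$ cannot be applied. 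Making this rigorous for all $d\s 3$ is the delicate point: I would use the characterization of $\rho^\sO_d$ through its Fourier transform in the space variable, whose slice is $\sin(t|\xi|)/|\xi|$, to argue that no slice $\rho^\sO_d(u,\cdot)$ admits a locally integrable representative, so that the displayed integral is meaningless. This yields non-existence for $d\s 3$ and completes the dichotomy.
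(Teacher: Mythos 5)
Your proposal is correct and follows essentially the same route as the paper: reduce \hyperlink{hyp2prime}{\textbf{(H2')}} by translation invariance to the finiteness of $\int_0^t\int_{\R^d}|\rho^\sO_d(u,w)|^\alpha\,\dd w\,\dd u$, compute it directly in $d=1$ and via polar coordinates in $d=2$ (where the condition $\alpha<2$ is exactly what makes the singularity $(u^2-|w|^2)^{-\alpha/2}$ integrable), and rule out $d\s 3$ because the forward fundamental solution is no longer a function. The paper simply asserts this last point as a known fact rather than elaborating a Fourier-transform justification, but the substance of the argument is identical.
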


\begin{proof}

\noindent
\underline{$d=1$:}
There is a mild solution to the wave equation driven by $\alpha$-stable noise if and only if for any $(t,x)\in \R^+\times \R, \ \rho_1^\sO(t-\cdot, x-\cdot) \in L^\alpha(\R_+\times \R)$ (see  \hyperlink{hyp2prime}{\textbf{(H2')}}). Therefore, for any $T>0$, we need to check the finiteness of the integral
\begin{equation}\label{alphanorm}
 \int_0^T \dd t \int_\R \dd x \rho_1^\sO(t,x)^\alpha= \int_0^T \dd t \int_\R \dd x \,  \frac 1 {2^\alpha} \mathds 1_{|x|\I t} = \frac{T^2}{2^\alpha}\, .
\end{equation}
We deduce that the mild solution exists for any choice of $\alpha \in (0,2)$.
\vskip 12pt

\noindent
\underline{$d=2$:}
 The mild solution exists if and only if $ \rho_2^\sO(t-\cdot, x-\cdot) \in L^\alpha(\R_+\times \R)$ for any $(t,x)\in \R_+\times \R^2$ (see  \hyperlink{hyp2prime}{\textbf{(H2')}}). We have
\begin{align*}
 \| \rho_2^\sO (t-\cdot, x-\cdot) \|_{L^\alpha(\R_+\times \R^2)}^{\alpha \vee 1}&= \int_0^t \! \dd s \int_{\R^2 } \! \dd y \frac{1}{(2\pi)^\alpha \lp (t-s)^2-|x-y|^2 \rp^{\frac \alpha 2}}\\
 &=\frac 1 {(2\pi)^\alpha}\int_0^t \!\dd s \int_{|u|\I s} \! \dd u \frac{1}{(s^2- |u|^2)^{\frac \alpha 2}}\, .
\end{align*}
Changing to polar coordinates, we get
\begin{align*}
 \| \rho_2^\sO (t-\cdot, x-\cdot) \|_{L^\alpha(\R_+\times \R^2)}^{\alpha \vee 1}&=\frac 1 {(2\pi)^{\alpha-1}}\int_0^t \!\dd s \int_0^s \! \dd r \frac{r}{(s- r)^{\frac \alpha 2}(s+r)^{\frac \alpha 2}} \, .
\end{align*}
This integral is finite if and only if $\frac \alpha 2 <1$, that is $\alpha<2$.  We can further evaluate this integral and we get
\begin{align}\label{swealphanorm}
 \| \rho_2^\sO (t-\cdot, x-\cdot) \|_{L^\alpha(\R_+\times \R^2)}^{\alpha \vee 1}&=\frac 1 {(2\pi)^{\alpha-1}}\int_0^t \!\dd s \int_0^s \! \frac{\dd r}{2} \frac{2r}{(s^2- r^2)^{\frac \alpha 2}} \nonumber \\
 &= \frac 1 {(2\pi)^{\alpha-1}}\int_0^t \!\dd s \frac{s^{2-\alpha}}{2-\alpha}= \frac {t^{3-\alpha}}{(2\pi)^{\alpha-1}(2-\alpha)(3-\alpha)} \, .
\end{align}

Therefore, in dimension $2$, there is always a mild solution to the linear stochastic wave equation with $\alpha$-stable noise.
\vskip 12pt

\noindent
\underline{$d\s3$:}
Since fundamental solutions of the wave equation in dimension $d\s 3$ are not functions, there is no mild solution.
\end{proof}

\begin{rem}
From this proof, we can deduce the already known result in the Gaussian case (see \cite[p. 46]{minicourse}) that a mild solution to the linear stochastic wave equation only exists in spatial dimension one.
\end{rem}


\subsubsection{Existence of a random field solution}

\begin{prop}\label{rdprop6.10}
 The generalized solution $u_{\text{gen}}$ to the linear stochastic wave equation driven by a symmetric $\alpha$-stable noise has a random field representation if and only if $d\I 2$, and in that case, this random field representation is equal to $u_{\text{mild}}$ almost everywhere almost surely.
\end{prop}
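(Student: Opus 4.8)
The plan is to mirror the proof of Proposition \ref{rd10} for the heat equation, treating the two implications separately. For $d \I 2$ I would show that the mild solution, which exists by Proposition \ref{rdprop6.8}, coincides with the generalized solution via Theorem \ref{mildisweak}, so that $u_{\text{mild}}$ --- a jointly measurable random field by Proposition \ref{measurableversion} --- is itself a random field representation of $u_{\text{gen}}$. For $d \s 3$ I would argue by contraposition from Theorem \ref{rfieldsolution}, using that the fundamental solution of the wave operator is not a function in those dimensions.

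Concretely, for $d \in \{1,2\}$ hypothesis \hyperlink{hyp2prime}{\textbf{(H2')}} holds for every $\alpha \in (0,2)$, and it remains to verify, according to the value of $\alpha$, one of the conditions \eqref{alpha0}, \eqref{alpha1}, \eqref{alphag1}. When $\alpha > 1$, since $\rho_1^\sO$ and $\rho_2^\sO$ vanish for negative times, Remark \ref{mildisweakrem}(1)(b) reduces \eqref{alpha0} to the finiteness of $\int_0^\tau\!\int_{\R^d}|\rho_d^\sO(s,y)|^\alpha\dd s\,\dd y$ for every $\tau>0$, which is exactly the computation carried out in \eqref{alphanorm} (for $d=1$) and \eqref{swealphanorm} (for $d=2$). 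When $\alpha<1$, by Remark \ref{mildisweakrem}(2) it suffices to note that for any compact $K$ the map $s\mapsto \int_K|\rho_d^\sO(t-s)|\dd t$ is bounded (the singularity of $\rho_2^\sO$ being locally integrable) and supported in a compact subset of $S$, hence $\alpha$-integrable over $S$. In either case Theorem \ref{mildisweak} yields $u_{\text{mild}}=u_{\text{gen}}$.

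The delicate case, and the one I expect to be the main obstacle, is $\alpha=1$, where the logarithmic condition \eqref{alpha1} must be checked --- precisely the step that was most involved for the heat equation. I would follow the same scheme: bound $\log_+$ of the quotient appearing in \eqref{alpha1} by a sum of $|\log|$ terms and reduce to the finiteness of finitely many integrals $I_1,\dots,I_4$ as in the proof of Proposition \ref{rd10}. In dimension one this is in fact easier than for the heat equation, because $\rho_1^\sO$ takes only the values $0$ and $1/2$, so $|\log\rho_1^\sO|$ is constant on its support and the terms built from $\rho_1^\sO\log\rho_1^\sO$ collapse to multiples of $\rho_1^\sO$, the remaining terms being handled with the identity $\int_\R\rho_1^\sO(\tau,\xi)\dd\xi=\tau\,\mathds 1_{\tau>0}$. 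In dimension two the integrable singularity $1/\sqrt{t^2-|x|^2}$ of $\rho_2^\sO$ makes the logarithmic terms genuinely singular along the light cone and as $t\to+\infty$; I would control them by a bound of the same shape as \eqref{majoration} after passing to polar coordinates as in \eqref{swealphanorm}, relegating the lengthy but routine estimate to a reference as was done for the heat equation.

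For the converse, assume $d\s3$ and, for contradiction, that $u_{\text{gen}}$ admits a random field representation. Hypothesis \hyperlink{hyp1prime}{\textbf{(H1')}} holds in every dimension by Proposition \ref{rdprop6.7}, so Theorem \ref{rfieldsolution} applies and forces $\rho_d^\sO(t-\cdot)\in L^\alpha(S)$ for almost all $t$; its proof in fact shows $\rho_d^\sO$ must agree with a locally integrable function. This contradicts the fact, already used in Proposition \ref{rdprop6.8}, that in dimension $d\s3$ the fundamental solution of the wave operator is a genuine distribution (a multiple of surface measure on the sphere when $d=3$, and derivatives thereof for $d>3$) and not a function. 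Hence no random field representation exists when $d\s3$. Finally, when the representation does exist, its a.e.\ a.s.\ equality with $u_{\text{mild}}$ follows either directly from the identification $u_{\text{mild}}=u_{\text{gen}}$ above or from \eqref{mildresult} in Theorem \ref{rfieldsolution}, which completes the argument.
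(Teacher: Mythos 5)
Your proposal is correct and follows essentially the same route as the paper: for $d\I 2$ it verifies conditions \eqref{alpha0}, \eqref{alpha1}, \eqref{alphag1} of Theorem \ref{mildisweak} using the computations \eqref{alphanorm} and \eqref{swealphanorm} (with the $\alpha=1$ logarithmic estimate deferred to \cite{humeau1}, exactly as the paper does), and for $d\s 3$ it invokes Theorem \ref{rfieldsolution} together with the fact that $\rho_d^\sO$ is not a function. The only cosmetic difference is your observation that the $\alpha=1$, $d=1$ case simplifies because $\rho_1^\sO$ is $\{0,\tfrac12\}$-valued, which the paper does not spell out.
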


\begin{proof}
\noindent
\underline{$d=1$:}
We will show that the mild solution exists and is equal to the generalized solution by using Theorem \ref{mildisweak} and the Remark \ref{mildisweakrem}. If $\alpha>1$, it suffices to check that $\{\|\rho_1^\sO(t-\cdot, x-\cdot)\|_{L^\alpha(\R_+\times \R)}, \, (t,x)\in \R_+\times \R \} \in L^1_{\text{loc}}(\R_+\times \R)$, which is the case by \eqref{alphanorm}. If $\alpha<1$, we check that for any compact $K\subset \R^2$,
$$\int_{\R_+\times \R} \dd s \dd y  \lp \int_K \dd t \dd x\,  |\rho_1^\sO(t-s,x-y) |  \rp ^\alpha <+\infty\, .$$
It is easy to see that the function $(s,y) \mapsto \int_K \dd t \dd x\,  |\rho_1^\sO(t-s,x-y) | $ has compact support, which suffices to prove the claim. In the case $\alpha=1$, we check that for any compact set $K\subset \R^2$,
\begin{equation}\label{logmoche}
\begin{aligned}
  \int_{K}& \dd t \dd x  \int_{\R_+\times \R} \! \dd s \dd y \, |  \rho^\sO_1(t-s,x-y)| \left [ 1+ \vphantom{\log_+ \lp \frac{\int_1^2 | \rho^\sO_1(t-s,x-y)|}{\int_1^2}\rp} \right. \\
 &  \left. \log_+ \lp \frac{| \rho^\sO_1(t-s,x-y)| \int_{K}\dd u \dd r \int_{\R_+\times \R} \dd v \dd w | \rho^\sO_1(u-v,r-w)|     }{\lp\int_{\R_+\times \R} \dd v \dd w | \rho^\sO_1(t-v,x-w)| \rp\lp \int_{K} | \rho^\sO_1( u-s,r-y) |  \dd u \dd r \rp} \rp \right ] <+\infty \, .
\end{aligned}
\end{equation}
The details of this calculation can be found in \cite[Proof of Proposition 4.4.9]{humeau1}.

We conclude that, for any $\alpha \in (0,2)$, the mild solution is equal to the generalized solution, and the mild solution is the random field representation of the generalized solution.
\vskip 12pt

\noindent
\underline{$d=2$:}
In the case where $\alpha>1$, by \eqref{swealphanorm}, $(t,x)\in \R_+\times \R^2 \to \| \rho_2^\sO (t-\cdot, x-\cdot) \|_{L^\alpha(\R_+\times \R^2)}$ does not depend on $x$ and is continuous in the $t$ variable, therefore \eqref{alpha0} is verified, and the mild solution is equal to the generalized solution.

In the case where $\alpha<1$, we know from previous considerations that for any test function $\varphi$, $| \check  \rho^\sO_2 | *\varphi $ is smooth with compact support, therefore \eqref{alphag1} is verified, and the mild solution is equal to the generalized solution.

The case $\alpha=1$ is again more involved, since we need to consider the expression \eqref{alpha1}. We must check that for any compact set $K\subset \R^3$,
\begin{equation}\label{logmoche3}
\begin{aligned}
  \int_{K}& \dd t \dd x  \int_{\R_+\times \R^2} \! \dd s \dd y \, |  \rho^\sO_2(t-s,x-y)| \left [ 1+ \vphantom{\log_+ \lp \frac{\int_1^2 | \rho^\sO_2(t-s,x-y)|}{\int_1^2}\rp} \right. \\
 &  \left. \log_+ \lp \frac{| \rho^\sO_2(t-s,x-y)| \int_{K}\dd u \dd r \int_{\R_+\times \R^2} \dd v \dd w | \rho^\sO_2(u-v,r-w)|     }{\lp\int_{\R_+\times \R^2} \dd v \dd w | \rho^\sO_2(t-v,x-w)| \rp\lp \int_{K} | \rho^\sO_2( u-s,r-y) |  \dd u \dd r \rp} \rp \right ] <+\infty \, .
\end{aligned}
\end{equation}
For the details of this calculation, see \cite[Proof of 4.4.9]{humeau1}.
Therefore, for any $\alpha \in (0,2)$, the mild solution is equal to the generalized solution. and the mild solution is the random field representation of the generalized solution.
\vskip 12pt

\noindent
\underline{$d\s 3$:}
By Theorem \ref{rfieldsolution}, there cannot be any random field representation of the generalized solution, since $\rho^\sO_d \notin L^\alpha \lp [0,T]\times \R^d\rp$.
\end{proof}

We summarize Propositions \ref{rdprop6.7}, \ref{rdprop6.8} and \ref{rdprop6.10} in the following theorem.

\begin{theo}\label{wave_result}
 The generalized solution $u_{\text{gen}}$ to the stochastic wave equation \eqref{SWE} defined by \eqref{def1} always exists. The mild solution $u_{\text{mild}}$ defined by \eqref{solutionmildheat} exists if and only if $d\I 2$.
Furthermore, a random field representation $Y$ of the generalized solution exists if and only if $d\I 2$, and in this case, for almost all $(t,x) \in \R_+\times \R^d$,
\begin{equation*}
 Y_{t,x} =\scal{\dot W^\alpha}{\rho^\sO_d(t-\cdot, x-\cdot)}= u_{\text{mild}}(t,x) \qquad \text{a.s.} 
\end{equation*}
\end{theo}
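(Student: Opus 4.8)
The plan is to assemble the three preceding propositions into the combined statement, so the proof is essentially an organisation of Propositions \ref{rdprop6.7}, \ref{rdprop6.8} and \ref{rdprop6.10}; I will indicate how each clause follows. The existence of the generalized solution for every $d\s 1$ and $\alpha\in(0,2)$ is settled by verifying \textbf{(H1')}, namely that $\varphi*\check\rho_d^\sO\in L^\alpha(\R_+\times\R^d)$ for every $\varphi\in\m D(\R^{d+1})$. The geometric input is that $\rho_d^\sO$ is supported in the forward light cone $\{|x|\I t\}$; writing $\varphi*\check\rho_d^\sO(t,x)=\scal{\rho_d^\sO}{\varphi(t+\cdot,x+\cdot)}$, the cone together with the compact support of $\varphi$ forces the convolution to vanish unless $t$ is bounded and $x$ stays within a bounded distance of the support of $\varphi$. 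Thus $\varphi*\check\rho_d^\sO$ is a \emph{smooth, compactly supported} function, hence lies in every $L^\alpha$, irrespective of $d$ and $\alpha$, and $u_{\text{gen}}$ is always defined by \eqref{def1}.

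For the mild solution I would check \textbf{(H2')}, i.e.\ whether $\rho_d^\sO(t-\cdot,x-\cdot)\in L^\alpha(\R_+\times\R^d)$. When $d\I 2$ the fundamental solution is a genuine function: in dimension one it is the bounded indicator $\tfrac12\mathds 1_{|x|\I t}$, whose $\alpha$-power integrates to $T^2/2^\alpha<+\infty$; in dimension two it has the singularity $(t^2-|x|^2)^{-1/2}\mathds 1_{|x|<t}$, and a passage to polar coordinates reduces the $L^\alpha$-integral to $\int_0^t\!\int_0^s r(s^2-r^2)^{-\alpha/2}\dd r\,\dd s$, which is finite precisely when $\alpha<2$. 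Hence the mild solution exists for all $\alpha\in(0,2)$ when $d\I2$. When $d\s 3$ the fundamental solution is a distribution that is \emph{not} a function, so the pointwise pairing $\scal{\dot W^\alpha}{\rho_d^\sO(t-\cdot,x-\cdot)}$ is meaningless and no mild solution exists; this gives the ``only if $d\I 2$'' direction.

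The random field representation is then read off from the two extreme regimes. For $d\I 2$ I would invoke Theorem \ref{mildisweak} to identify $u_{\text{mild}}$ with $u_{\text{gen}}$ as generalized processes, so that the (already existing) mild solution \emph{is} the random field representation, and the asserted identity $Y_{t,x}=\scal{\dot W^\alpha}{\rho_d^\sO(t-\cdot,x-\cdot)}$ holds a.s.\ for a.e.\ $(t,x)$. Applying Theorem \ref{mildisweak} requires verifying condition \eqref{alpha0}, \eqref{alpha1} or \eqref{alphag1} according as $\alpha>1$, $\alpha=1$ or $\alpha<1$; for $\alpha\neq1$ these reduce, via Remark \ref{mildisweakrem} together with the local integrability of the $L^\alpha$-norm of the kernel and the compact support of $|\check\rho_d^\sO|*|\varphi|$, to facts already established above. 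For $d\s 3$ I would argue by contradiction using Theorem \ref{rfieldsolution}: the existence of a random field representation would force $\rho_d^\sO(t-\cdot)\in L^\alpha(S)$ for almost every $t$, contradicting that $\rho_d^\sO$ is not a function. Combining the three regimes yields that a random field representation exists if and only if $d\I2$.

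The main obstacle is the borderline case $\alpha=1$, where Theorem \ref{mildisweak} demands the $L\log L$-type condition \eqref{alpha1} rather than a bare $L^\alpha$ bound. Verifying \eqref{alpha1} for the wave kernels requires controlling the logarithmic correction factor, and in dimension two one must do so in the presence of the integrable singularity $(t^2-|x|^2)^{-1/2}$; this is the genuinely technical point. I would either estimate the logarithmic factor directly, splitting $\log_+$ into the manageable pieces as in the heat-equation argument, or defer the computation to \cite[Proof of Proposition 4.4.9]{humeau1} as the authors do.
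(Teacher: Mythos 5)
Your proposal is correct and follows essentially the same route as the paper: the theorem is exactly the aggregation of Propositions \ref{rdprop6.7}, \ref{rdprop6.8} and \ref{rdprop6.10}, and your verification of \textbf{(H1')} via the light-cone support, of \textbf{(H2')} via the explicit $L^\alpha$-computations in $d\I 2$ and the non-function nature of $\rho^\sO_d$ for $d\s 3$, and of the random field dichotomy via Theorems \ref{mildisweak} and \ref{rfieldsolution} reproduces the paper's arguments, including deferring the $\alpha=1$ logarithmic estimates to \cite{humeau1}.
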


\subsection{The stochastic Poisson equation}
Let $\nalpha W$ be an $\alpha$-stable symmetric noise on $\R_+\times \R^d$. The laplacian operator $\Delta$ is given by
$$
   \Delta = \sum_{i=1}^d \frac{\partial^2}{\partial x_i^2}\, .
$$
The fundamental solution of the Poisson operator $\sP = - \Delta$ on $\R^d$ is given by
\begin{align*}
   \rho_\sP^1(x) &= \frac{1}{2} \vert x \vert, \qquad\quad x \in \R,\\
   \rho_\sP^2(x) &= \frac{1}{2\pi} \ln\frac{1}{\vert x \vert} , \qquad x \in \R^2 \setminus\{0\},\\
   \rho_\sP^d(x) &= \frac{1}{C_d} \frac{1}{\vert x \vert^{d-2}},\qquad x \in \R^d \setminus\{0\},\ d \geq 3,
\end{align*}
where
$$
   C_d = \frac{2 \pi^{\frac{d}{2}}(d-2)}{\Gamma(\frac{d}{2})}.
$$

   We consider the following SPDE in $\R^d$:
\begin{equation}\label{poisson_eq}
   - \Delta u = \nalpha W .
\end{equation}

\begin{theo}\label{poisson_result}
(a) For $d \geq 1$ and $\alpha \in (0,2)$, there is no mild solution, hence no random field solution, to \eqref{poisson_eq}.

(b) There is a generalized solution to \eqref{poisson_eq} if and only if $d>4$ and $\alpha \in \left(\frac{d}{d-2},2\right)$.
\end{theo}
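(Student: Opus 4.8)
The plan is to reduce both statements to integrability properties of the fundamental solution, invoking \hyperlink{hyp2prime}{\textbf{(H2')}} for the mild solution and \hyperlink{hyp1prime}{\textbf{(H1')}} for the generalized solution, and then to carry out the resulting integral estimates dimension by dimension. Throughout, I would use that each $\rho_\sP^d$ is radial, hence even, so $\check\rho_\sP=\rho_\sP$, and that $\rho_\sP^d$ is locally integrable for $d\s 3$, so that convolutions of $\rho_\sP^d$ with test functions are smooth and the only possible obstruction to membership in $L^\alpha$ is the behaviour at infinity.

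For part (a), I would note that by \hyperlink{hyp2prime}{\textbf{(H2')}} a mild solution exists iff $\rho_\sP^d(t-\cdot)\in L^\alpha(\R^d)$ for every $t$, which by translation invariance of Lebesgue measure is equivalent to $\int_{\R^d}|\rho_\sP^d(x)|^\alpha\dd x<+\infty$. In dimensions $d=1$ and $d=2$ the fundamental solution grows at infinity (like $|x|$ and like $\log|x|$, respectively), so the integral diverges at infinity for every $\alpha\in(0,2)$. For $d\s 3$, $|\rho_\sP^d(x)|^\alpha$ is a constant multiple of $|x|^{-(d-2)\alpha}$, and integrability near the origin requires $(d-2)\alpha<d$ while integrability near infinity requires $(d-2)\alpha>d$; these conditions are incompatible, so the integral is again infinite. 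Hence \hyperlink{hyp2prime}{\textbf{(H2')}} fails in every dimension and there is no mild solution. Since this computation shows \hyperlink{hyp2prime}{\textbf{(H2')}} fails for \emph{every} $t$, Theorem \ref{rfieldsolution} forbids a random field representation of the generalized solution (whenever the latter exists), so there is no random field solution either.

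For part (b), I would use \hyperlink{hyp1prime}{\textbf{(H1')}}: a generalized solution exists iff $\varphi*\rho_\sP^d\in L^\alpha(\R^d)$ for every $\varphi\in\m D(\R^d)$. The key step is the tail expansion, obtained from $|x-y|^{-(d-2)}=|x|^{-(d-2)}(1+O(|y|/|x|))$ uniformly for $y$ in the support of $\varphi$, namely $\varphi*\rho_\sP^d(x)=\rho_\sP^d(x)\lp\int_{\R^d}\varphi+O(|x|^{-1})\rp$ as $|x|\to+\infty$. Thus for $d\s 3$ and $\varphi$ with $\int\varphi\neq 0$, the quantity $|\varphi*\rho_\sP^d(x)|$ is bounded above and below by constant multiples of $|x|^{-(d-2)}$ for large $|x|$, so $\int_{\R^d}|\varphi*\rho_\sP^d|^\alpha\dd x<+\infty$ iff $(d-2)\alpha>d$, i.e.\ $\alpha>\tfrac{d}{d-2}$. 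Combined with the standing constraint $\alpha<2$, this yields the interval $\tfrac{d}{d-2}<\alpha<2$, which is nonempty iff $\tfrac{d}{d-2}<2$, i.e.\ iff $d>4$.

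To finish, I would argue both directions and the low dimensions. For sufficiency, when $\alpha>\tfrac{d}{d-2}$ the expansion gives $|\varphi*\rho_\sP^d(x)|\I C_\varphi|x|^{-(d-2)}$ for large $|x|$, so the $L^\alpha$ tail is finite for every test function, while local finiteness follows from smoothness; hence \hyperlink{hyp1prime}{\textbf{(H1')}} holds. For necessity it suffices to exhibit one bad test function: taking $\varphi\s 0$ with $\int\varphi=1$ gives the matching lower bound $|\varphi*\rho_\sP^d(x)|\s c|x|^{-(d-2)}$, so the $L^\alpha$ integral diverges when $\alpha\I\tfrac{d}{d-2}$, and the same $\varphi$ shows for $d\in\{1,2\}$ that $\varphi*\rho_\sP^d$ fails to decay, ruling out all $\alpha\in(0,2)$ consistently with $d>4$. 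The main obstacle is making the tail expansion both uniform and two-sided (the upper bound for sufficiency and the lower bound for necessity) and ruling out cancellation in the convolution; the latter is handled cleanly by restricting the necessity argument to a single nonnegative $\varphi$ with $\int\varphi=1$.
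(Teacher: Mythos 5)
Your proposal is correct and follows essentially the same route as the paper: both parts reduce to the integrability hypotheses \hyperlink{hyp2prime}{\textbf{(H2')}} and \hyperlink{hyp1prime}{\textbf{(H1')}} and are settled by a dimension-by-dimension analysis of the tail behaviour of $\rho_\sP^d$ and of $\varphi*\check\rho_\sP^d$. The only (minor) difference is in part (b) for $d\s 3$, where you use a two-sided uniform tail expansion of the convolution, whereas the paper obtains the lower bound by restricting the integral to a small ball around the origin and the upper bound via the generalized Minkowski inequality; both yield the same condition $\alpha(d-2)>d$.
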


\begin{proof}
(a) It is immediate to check that for all $d\geq 1$ and $\alpha \in (0,2)$, $\rho_\sP^d \notin L^\alpha(\R^d)$, therefore, there is no mild solution to \eqref{poisson_eq}.

(b) Turning to the generalized solution, we first examine dimensions $1$ and $2$.
\vskip 12pt

\noindent
\underline{$d=1$:} Let $\varphi \in \m D(\R)$. Then
\begin{align*}
   \check \rho_\sP^1 * \varphi(x) &= -\frac{1}{2} \int_\R \vert x-y\vert \varphi(y)\, dy \\
   &= -\frac{1}{2} \left(x \left(\int_{-\infty}^x \varphi(y)\, dy - \int_x^{+\infty} \varphi(y) \, dy \right) + \int_x^{+\infty} y \varphi(y) \, dy - \int_{-\infty}^x y \varphi(y)\, dy \right).
\end{align*}
Since $\varphi$ has compact support, for large enough $\vert x \vert$,
\begin{equation}\label{6.15}
   \check \rho_\sP^1 * \varphi(x) = -\frac{1}{2} \left(x \int_\R  \varphi(y)\, dy - \int_\R  y \varphi(y)\, dy \right).
\end{equation}
In particular, we see that for any $\alpha \in (0,2)$, $\check \rho_\sP^d * \varphi \notin L^\alpha(\R)$ (unless the two integrals in \eqref{6.15} vanish), and there is no generalized solution to \eqref{poisson_eq}.
\vskip 12pt

\noindent
\underline{$d=2$:} Let $\varphi \in \m D(\R^{2})$. Then
\begin{align*}
   \check \rho_\sP^2 * \varphi(x) &= \frac{1}{2\pi} \int_{\R^2} \ln \frac{1}{\vert x-y\vert}\,  \varphi(y)\, dy.
\end{align*}
Assuming that $\varphi \geq 0$ and $\varphi \not\equiv 0$, for $\vert x \vert$ large enough and $\varepsilon$ small enough, the right-hand side is bounded below by
$$
   \frac{\Vert \varphi\Vert_\infty}{2} \int_{\vert y \vert \leq \varepsilon}  \ln \frac{1}{\vert x-y\vert}\, dy \geq \varepsilon^2 \frac{\Vert \varphi\Vert_\infty}{2} \frac{1}{\ln(\vert x \vert +\varepsilon)},
$$
hence $\check \rho_\sP^2 * \varphi \notin L^\alpha(\R^2)$.
\vskip 12pt

\noindent
\underline{$d \geq 3$:} Let $\varphi \in \m D(\R^{d})$. Then
\begin{align*}
   \check \rho_\sP^d * \varphi(x) &= \int_{\R^d} \frac{1}{\vert x-y\vert^{d-2}} \, \varphi(y) \, dy.
\end{align*}
This is a $C^\infty$-function of $x$, hence we only need to consider its integrability as $\vert x\vert \to \infty$. Proceeding as for the case $d=2$, for $\varphi \geq 0$ and $\varphi \not\equiv 0$,  we can bound the integral below, up to a constant, by
$$
   \int_{\vert y \vert \leq \varepsilon} \frac{1}{\vert x-y\vert^{d-2}} \, dy \geq \varepsilon^d \frac{1}{(\vert x \vert +\varepsilon)^{d-2}}.
$$
Passing to polar coordinates, we see that $\check \rho_\sP^d * \varphi \notin L^\alpha(\R^d)$ unless $\alpha (2-d) + d <0$, which is equivalent to $\alpha > \frac{d}{d-2}$. Since $\alpha < 2$, this can only occur if $d >4$.

On the other hand, if $d>4$ and $\alpha > \frac{d}{d-2}$, then for $N >0$, by the generalized Minkowski inequality,
$$
   \int_{\vert x \vert > N} dx \left \vert \int_{\R^d} \frac{1}{\vert x-y\vert^{d-2}} \, \varphi(y) \, dy \right\vert^\alpha \leq \int_{\R^d} dy\, \vert \varphi(y)\vert^\alpha \left \vert \int_{\vert x \vert > N} \frac{1}{\vert x-y\vert^{\alpha(d-2)}}  \, dx \right\vert
$$
The $dx$-integral only needs to be evaluated for $y$ in a bounded set. For large enough $N$, the $dx$-integral is finite if and only if $\alpha (2-d) + d <0$, which is the case since $d>4$ and $\alpha > \frac{d}{d-2}$.

In summary, $\check \rho_\sP^d * \varphi \in L^\alpha(\R^2)$ if and only if $d>4$ and $\alpha > \frac{d}{d-2}$, and statement (b) is proved.
\end{proof}


\end{document}